\font\bbbld=msbm10 scaled\magstephalf
\newcommand{\bj}{\bar{j}}
\newcommand{\bpartial}{\bar{\partial}}
\def \a{\alpha}
\def \p{\partial}
\def \f{\frac}
\def \G{\Gamma}
\newcommand{\fRe}{\mathfrak{Re}}
\newcommand{\bfC}{\hbox{\bbbld C}}
\newcommand{\bfR}{\hbox{\bbbld R}}
\newcommand{\ol}{\overline}
\newcommand{\ul}{\underline}
\newtheorem{theorem}{Theorem}[section]
\newtheorem{lemma}[theorem]{Lemma}
\newtheorem{proposition}[theorem]{Proposition}
 \theoremstyle{definition}
\theoremstyle{remark}
\newtheorem{remark}[theorem]{Remark}
\numberwithin{equation}{section}
\begin{document}
\setlength{\baselineskip}{1.2\baselineskip}

\title[Fully Nonlinear Elliptic Equations]
{On a class of fully nonlinear elliptic equations\\
    on closed Hermitian manifolds}

\author{Wei Sun}

\address{Department of Mathematics, Ohio State University,
         Columbus, OH 43210}
\email{sun@math.ohio-state.edu}

\begin{abstract}
We study a class of fully nonlinear elliptic equations on closed Hermitian manifolds. We derive $C^\infty$ {\em a priori} estimates, and then prove the existence of admissible solutions. In the approach, a new Hermitian metic is constructed to launch the method of continuity. 
\end{abstract}

\maketitle

\section{Introduction}
\label{ma2-int}
\setcounter{equation}{0}
\medskip

Let $(M^n, \omega)$ be a compact Hermitian manifold of complex dimension $n\geq 2$ and $\chi$ a smooth real $(1,1)$ form on $M^n$. Write $\omega$ and $\chi$ respectively as
\[
\omega = \frac{\sqrt{-1}}{2} \sum_{i,j} g_{i\bar j} dz^i \wedge d\bar z^j,
\]
and
\[
\chi = \frac{\sqrt{-1}}{2} \sum_{i,j} \chi_{i\bar j} dz^i \wedge d\bar z^j.
\]
We denote $\chi_u : = \chi + \frac{\sqrt{-1}}{2} \p\bpartial u$, and also set 
\[ 
[\chi] := \big\{\chi_u:  \, u \in C^2 (M)\big\}, \quad [\chi]^+ := \big\{\chi' \in [\chi]: \chi' > 0\}. 
\]
For a smooth positive real function $\psi$ on $M$, we are concerned with the following fully nonlinear equation
\begin{equation}
\label{ma2-main-equation}
\left\{
\begin{aligned}
    \left(\chi + \frac{\sqrt{-1}}{2}\p\bpartial u \right)^n =& \psi \left(\chi + \frac{\sqrt{-1}}{2}\p\bpartial u\right)^{n - \a} \wedge \omega^\a , \\
    \chi + \frac{\sqrt{-1}}{2}\p\bpartial u >& 0 ,  \qquad \sup_M u =0 .
\end{aligned}
\right.
\end{equation}
Following \cite{SW08}, \cite{FLM11} and \cite{GSun12}, we define
\begin{equation}
\label{ma2-definition-cone-condition}
\mathscr{C}_\a (\psi) := \{[\chi]: \exists \chi' \in [\chi]^+, n\chi'^{n-1} > (n - \a)\psi \chi'^{n - \a - 1} \wedge \omega^\a\}.
\end{equation}
If $\chi \in \mathscr{C}_\a (\psi)$, we say that $\chi$ satisfies the cone condition.  In this paper, we study equation~\eqref{ma2-main-equation} on closed Hermitian manifolds. We wish to find an admissible solution $u \in C^\infty(M)$ to equation~\eqref{ma2-main-equation} under the cone condition.

When $\alpha = n$, it is exactly the complex Monge-Amp\`ere equation, which is strongly connected with complex geometry. Calabi~\cite{Calabi57} proved the uniqueness of the admissible solution up to a constant multiple. In the fundamental work of Yau~\cite{Yau78} (see also \cite{Aubin78}), he proved the Calabi conjecture~\cite{Calabi56} by solving the complex Monge-Amp\`ere equation. Cherrier~\cite{Cherrier87}, Tosatti and Weinkove~\cite{TWv10a} independently solved the complex Monge-Amp\`ere equations on closed Hermitian manifolds in complex dimension $2$ or under the balanced condition in higher dimensions. Later, Tosatti and Weinkove~\cite{TWv10b} successfully removed the balanced condition and extended the result to general Hermitian manifolds.

For $\alpha = 1$, equation~\eqref{ma2-main-equation} was proposed by Donaldson~\cite{Donaldson99a} in connection with moment maps and is closely related to the Mabuchi energy \cite{Chen04}, \cite{Weinkove06}, \cite{SW08}. It is well known that when $\chi$ and $\omega$ are K\"ahler, there is an invariant defined by
\begin{equation}
\label{ma2-kahler-constant}
 c = \frac{\int_M \chi^n}{\int_M \chi^{n - \a} \wedge \omega^\a}.
\end{equation}  Donaldson's problem assumes that $M$ is closed, both $\omega$, $\chi$ are K\"ahler and $\psi$ is $c$. It was studied by Chen~\cite{Chen00b},  \cite{Chen04}, Weinkove~\cite{Weinkove04}, \cite{Weinkove06}, Song and Weinkove~\cite{SW08} using the parabolic flow method. In \cite{SW08} Song and Weinkove gave a necessary and sufficient solvability condition. Their result was extended by Fang, Lai and Ma~\cite{FLM11} to all $1 \leq \alpha < n$, and the condition was named the cone condition.



In this paper, we shall prove the following  {\em a priori} estimates, which are the fundamental of the study.
\begin{theorem}
\label{ma2-theorem-estimate}
Let $(M^n,\omega)$ be a closed Hermitian manifold of complex dimension $n$ and $u$ be a smooth solution of the equaion \eqref{ma2-main-equation}. Suppose that $\chi \in \mathscr{C}_\a (\psi)$. Then there are uniform $C^\infty$ a priori estimates of $u$.
\end{theorem}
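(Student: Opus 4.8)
The plan is to establish the bounds in the standard hierarchy: a uniform $C^{0}$ estimate, then a second-order estimate bounding $\Delta u$ by a multiple of $1+|\nabla u|^{2}$, then a $C^{1}$ estimate, then a $C^{2,\beta}$ estimate by the complex Evans--Krylov theorem, and finally $C^{\infty}$ bounds by bootstrapping Schauder estimates. It helps to first rewrite \eqref{ma2-main-equation} in terms of eigenvalues: dividing by $\omega^{n}$ and letting $\lambda(\chi_{u})=(\lambda_{1},\dots,\lambda_{n})$ denote the eigenvalues of $\chi_{u}$ relative to $\omega$, the equation becomes $\sigma_{n}(\lambda)/\sigma_{n-\a}(\lambda)=\tilde\psi$ for a smooth positive $\tilde\psi$, so the admissible cone is the positive cone $\{\lambda:\lambda_{i}>0\}$, and the operator $F:=(\sigma_{n}/\sigma_{n-\a})^{1/\a}$ is smooth, strictly increasing in each $\lambda_{i}$, and concave there. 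Write $\mathcal{L}=\sum_{i,j}F^{i\bar j}\p_{i}\p_{\bar j}$ for the linearized operator; it is elliptic, uniformly so once $\lambda(\chi_{u})$ lies in a fixed compact subset of the positive cone. These structural facts are what the argument uses repeatedly.

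For the $C^{0}$ bound, use the cone condition $\chi\in\mathscr{C}_{\a}(\psi)$ to fix $w\in C^{\infty}(M)$ with $\chi':=\chi+\frac{\sqrt{-1}}{2}\p\bpartial w>0$ and $n\chi'^{n-1}>(n-\a)\psi\,\chi'^{n-\a-1}\wedge\omega^{\a}$; the latter says precisely that $\chi'$ is a strict subsolution of \eqref{ma2-main-equation} at the level of eigenvalues. As in the K\"ahler case of \cite{SW08} and \cite{FLM11}, comparing $u$ with $w$ through $\mathcal{L}$, supplemented by an $L^{\infty}$ estimate in the spirit of Tosatti--Weinkove to cope with $\omega$ failing to be closed, yields $\|u\|_{C^{0}}\le C$. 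The heart of the proof is the second-order estimate, which I would obtain by applying the maximum principle to
\[
W=\log\lambda_{\max}(\chi_{u})+\phi(|\nabla u|^{2})+A\,(u-w),
\]
with $\phi$ a suitably chosen function of one real variable and $A$ a large constant (the mild non-smoothness of $\lambda_{\max}$ at eigenvalue collisions being dealt with by the usual perturbation). At an interior maximum of $W$ one differentiates $\chi_{u}$ twice and: uses concavity of $F$ to absorb the third-order terms; uses $\phi$ to cancel the principal first-order terms; and uses $A\,\mathcal{L}(u-w)$, which has a definite sign since $\chi'$ is a strict subsolution, to dominate the remaining zeroth-order terms together with the Hermitian torsion terms (those carrying $\p g$, $\bpartial g$ and the Chern torsion, which vanish when $\omega$ is K\"ahler). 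Rearranging gives $\sup_{M}\Delta u\le C\big(1+\sup_{M}|\nabla u|^{2}\big)$.

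With this second-order bound (which still involves the gradient) in hand, I would reduce the $C^{1}$ bound to the $C^{0}$ bound by a blow-up/Liouville argument in the spirit of Dinew--Kolodziej: were $\sup_{M}|\nabla u|$ not a priori controlled, choose a sequence along which $\Lambda_{k}:=\sup_{M}|\nabla u|\to\infty$, rescale coordinates near a point of maximal gradient by $\Lambda_{k}$, and use the $C^{0}$ and second-order bounds to extract a $C^{1,\gamma}_{\mathrm{loc}}$ limit --- a bounded entire solution on $\bfC^{n}$ of a constant-coefficient Hessian-quotient equation whose gradient is nonzero at the origin, contradicting the Liouville theorem for such equations (the analogue of the complex Monge--Amp\`ere Liouville theorem); a direct maximum-principle gradient estimate, with careful treatment of the torsion, would also do. Once $\|u\|_{C^{2}}$ is bounded, $\lambda(\chi_{u})$ ranges in a fixed compact subset of the positive cone, so $F$ is uniformly elliptic and uniformly concave there; the complex Evans--Krylov theorem then gives an interior $C^{2,\beta}$ bound, and differentiating \eqref{ma2-main-equation} and iterating Schauder estimates gives uniform $C^{k}$ bounds for all $k$.

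The main obstacle is the second-order estimate. On a general (non-K\"ahler) Hermitian manifold the commutator and torsion terms produced when $\chi_{u}$ is differentiated twice do not vanish, and controlling them needs both a delicate choice of the auxiliary data $\phi$ and $A$ and a genuine use of the strict subsolution $\chi'$ furnished by the cone condition: one has only this weak, eigenvalue-level subsolution rather than a classical one, so the inequality $n\chi'^{n-1}>(n-\a)\psi\,\chi'^{n-\a-1}\wedge\omega^{\a}$ must be exploited through the linearized operator $\mathcal{L}$ rather than through a pointwise comparison.
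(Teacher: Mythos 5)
The central gap is the $C^{0}$ estimate, which you place first in the hierarchy and justify only by ``comparing $u$ with $w$ through $\mathcal{L}$'' plus an unspecified $L^{\infty}$ estimate ``in the spirit of Tosatti--Weinkove''. On a closed manifold the subsolution comparison through the linearized operator yields no $L^{\infty}$ bound (there is no boundary to anchor the maximum principle, only the normalization $\sup_{M}u=0$), and Yau-type integral or Moser-iteration arguments fail here because $\omega$ need not be closed and $\alpha<n$ destroys the cohomological identities. The Tosatti--Weinkove $L^{\infty}$ machinery you invoke is not an independent estimate: its required input is precisely a second-order bound of the form $\Delta u+\mathrm{tr}\,\chi\le Ce^{A(u-\inf_{M}u)}$, i.e.\ one depending only on $u$ itself and not on $\nabla u$. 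This is why the paper's logical order is the reverse of yours: the sharp, gradient-free second-order estimate of Section 3 comes first, and the $C^{0}$ bound is then deduced from it. Your proposed bound $\sup_{M}\Delta u\le C(1+\sup_{M}|\nabla u|^{2})$ cannot feed into that machinery, so as written your scheme has no working route to the $C^{0}$ bound at all.

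Two further points. First, the gradient-dependent second-order estimate forces you into a blow-up/Liouville argument for $|\nabla u|$; the Liouville theorem you would need is for the Hessian quotient $\sigma_{n}/\sigma_{n-\alpha}$ on $\bfC^{n}$, which is not available in the literature you cite (Dinew--Ko{\l}odziej treat $\sigma_{k}$ equations, not quotients), so that step is also unsupported. Second, the paper avoids both difficulties with a different test function: it maximizes $e^{\phi}(\Delta u+\mathrm{tr}\,\chi)$ with $\phi=-A(u-\ul u)+\bigl(u-\ul u-\inf_{M}(u-\ul u)+1\bigr)^{-1}$, where the reciprocal term of Phong--Sturm is exactly what absorbs the Hermitian torsion terms without introducing any gradient dependence and without the extra hypothesis $d\chi\wedge\omega^{n-2}=0$; the subsolution from the cone condition enters through Lemma~\ref{ma2-lemma-alternative}, much as you describe. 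Once $u$ and $\Delta u$ are bounded, the eigenvalues lie in a compact subset of the positive cone, so no separate gradient estimate is needed and the $C^{2,\beta}$ and higher bounds follow from the complex Evans--Krylov argument of Section~\ref{ma2-higher} and Schauder theory, as in your final step.
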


We significantly improve the $C^2$ estimate in \cite{GSun12}, which is the foundation of all estimates. Specifically, we derive that
\begin{equation}
	\Delta u + tr\chi \leq C e^{ A(u - \inf_M u)}
\end{equation}
where $C$ and $A$ are uniform constants. This estimate is sharper than that in \cite{GSun12}
\begin{equation}
	\Delta u + tr\chi \leq C e^{\left( e^{A(\sup_M (u - \ul u) - \inf_M (u - \ul u)) - e^{A (\sup_M (u - \ul u) - (u-\ul u))}}\right)} .
\end{equation}
As shown in \cite{TWv10a}, to obtain the $C^0$ estimate it suffices to prove the improved version of $C^2$ estimate. In fact, the $C^0$ estimate is often the most difficult one for elliptic and parabolic problems on closed manifolds, which strongly motivated us to improve the $C^2$ estimate.  Moreover, the higher order estimates are thus guaranteed by Evans-Krylov theory~\cite{Evans82},~\cite{Krylov82}  and Schauder estimates.

In order to prove the sharp estimate on general Hermitian manifolds without any other condition, a key trick due to Phong and Sturm~\cite{PhongSturm10} is applied. This trick can help us get rid of the extra condition
\begin{equation}
	d\chi \wedge \omega^{n - 2} = 0,
\end{equation}
which is an extension of balanced metrics.

 
 As an application of Theorem~\ref{ma2-theorem-estimate} and method of continuity, we are now able solve to equation~\eqref{ma2-main-equation} up to a constant multiple in some cases.  Albeit method of continuity is a standard method in the study of elliptic equations, it is not easy to implemented for equation~\eqref{ma2-main-equation} on closed Hermitian manifolds or even K\"ahler manifolds.  After overcoming the difficulties in the work, we realize that the method of continuity has much more potential than we thought.

First, it is very hard to find appropriate function spaces and functionals, which make the inverse function theorem work. To resolve the issue, we develop the method carried out in \cite{TWv10a}. 
In the approach, a new Hermitian metric is defined, which is a crucial new technique in applying method of continuity. The new technique reveals the importance and necessity of studying Hermtian metrics, since the metric is Hermitian even if both $\chi$ and $\omega$ are K\"ahler. On the other hand, we notice that the technique can be applied to more general equations than equation~\eqref{ma2-main-equation}. Further, it is known that Hermitian metric is much more complete than K\"ahler metric in complex geometry since every complex manifold has Hermitian metrics, and each Hermitian metric has a unique Chern connection. This implies that we now have a less restricted tool to study complex problems, and it is reasonable to expect more important results in future.

Second,  the essential cone condition is dependent on $\psi$. It is very possible that the condition does not work for the whole equation flow in the method of continuity. For general Hermitian manifolds, we impose an extra condition and use the maximum principle.
\begin{theorem}
\label{ma2-theorem-hermitian}
Under the assumption of Theorem~\ref{ma2-theorem-estimate},  there exists a solution of equation \eqref{ma2-main-equation} up to a constant multiple if
\begin{equation}
\label{ma2-theorem-hermitian-condition}
    \frac{\chi^n}{\chi^{n - \a} \wedge \omega^\a}\leq \psi .
\end{equation}
\end{theorem}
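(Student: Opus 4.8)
The plan is to prove Theorem~\ref{ma2-theorem-hermitian} by the method of continuity, deforming $\psi$ to a model right-hand side for which \eqref{ma2-main-equation} is trivially solvable and then using the a priori estimates of Theorem~\ref{ma2-theorem-estimate} to push through the closedness. We may assume $\chi>0$. For $t\in[0,1]$ set
\[
\psi_t:=(1-t)\,\f{\chi^n}{\chi^{n-\a}\wedge\o^\a}+t\,\psi ,
\]
a smooth family of positive functions with $\psi_0=\chi^n/(\chi^{n-\a}\wedge\o^\a)$ and $\psi_1=\psi$, and consider, for an unknown pair $(u,b)$ with $u\in C^\infty(M)$ and $b>0$ a constant,
\begin{equation}
\label{ma2-cont}
\left(\chi+\f{\sqrt{-1}}{2}\p\bpartial u\right)^{n}=b\,\psi_t\left(\chi+\f{\sqrt{-1}}{2}\p\bpartial u\right)^{n-\a}\wedge\o^\a ,\qquad \chi_u>0,\quad \sup_M u=0 .
\end{equation}
The free constant $b$ is precisely what is meant by ``solution up to a constant multiple.'' Let $T\subset[0,1]$ be the set of $t$ for which \eqref{ma2-cont} is solvable; since $(u,b)=(0,1)$ solves it at $t=0$, we have $0\in T$, and the theorem follows once $T$ is shown to be open and closed.

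The decisive point, and the reason for hypothesis \eqref{ma2-theorem-hermitian-condition}, is that the cone condition survives along the family. For a positive $(1,1)$-form $\chi'$ write $Q(\chi'):=\chi'^{\,n}/(\chi'^{\,n-\a}\wedge\o^\a)$; in terms of the eigenvalues $\l_1,\dots,\l_n$ of $\chi'$ relative to $\o$ one has $Q(\chi')=\binom{n}{\a}\big/\sigma_\a(\l_1^{-1},\dots,\l_n^{-1})$, which is increasing in each $\l_i$, so, since the ordered eigenvalues are monotone in $\chi'$, one has $\chi'\le\chi''\Rightarrow Q(\chi')\le Q(\chi'')$. Now let $(u_t,b_t)$ solve \eqref{ma2-cont} and let $p$ be a maximum point of $u_t$; there $\f{\sqrt{-1}}{2}\p\bpartial u_t\le0$, whence $\chi_{u_t}(p)\le\chi(p)$ and $Q(\chi_{u_t})(p)\le Q(\chi)(p)$. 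Since $Q(\chi_{u_t})=b_t\psi_t$ by \eqref{ma2-cont}, while \eqref{ma2-theorem-hermitian-condition} gives $\psi\ge Q(\chi)$ and hence $\psi_t\ge Q(\chi)$, we obtain $b_t\psi_t(p)\le Q(\chi)(p)\le\psi_t(p)$, so $b_t\le1$. Therefore, everywhere on $M$,
\[
b_t\psi_t\le(1-t)Q(\chi)+t\psi\le\psi
\]
by \eqref{ma2-theorem-hermitian-condition} again; and a representative $\chi'\in[\chi]^+$ witnessing $\chi\in\mathscr{C}_\a(\psi)$ a fortiori witnesses $\chi\in\mathscr{C}_\a(b_t\psi_t)$ (here $n-\a>0$ and $\chi'^{\,n-\a-1}\wedge\o^\a\ge0$), so $\chi\in\mathscr{C}_\a(b_t\psi_t)$ for every $t\in T$. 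Evaluating at a minimum point of $u_t$ gives, by the same monotonicity, a uniform lower bound $b_t\ge c_0>0$. Consequently the right-hand sides $b_t\psi_t$, $t\in T$, are uniformly bounded in every $C^k$, uniformly bounded below, and satisfy a uniform cone condition, so Theorem~\ref{ma2-theorem-estimate} yields $C^\infty$ estimates for the solutions of \eqref{ma2-cont} that are uniform in $t\in T$.

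Closedness of $T$ is then immediate: for $t_j\to t_\infty$ with solutions $(u_{t_j},b_{t_j})$, the uniform $C^\infty$ bounds together with $b_{t_j}\in[c_0,1]$ give a subsequential limit $(u_{t_\infty},b_{t_\infty})$ that is smooth, satisfies $\chi_{u_{t_\infty}}>0$ (the lower eigenvalue bound being uniform), and solves \eqref{ma2-cont} at $t_\infty$. For openness I would use the inverse function theorem in H\"older spaces, which is where the new Hermitian metric enters. The linearization in $u$ of the operator in \eqref{ma2-cont} at a solution $(u_0,b_0)$ is $L v=a^{i\bar j}v_{i\bar j}$ with $a^{i\bar j}=\partial Q/\partial\chi_{i\bar j}$ evaluated at $\chi_{u_0}$, which is positive definite; $L$ is elliptic with no first- or zeroth-order term, so $\ker L$ consists of the constants by the strong maximum principle. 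On a general Hermitian manifold $L$ is not of divergence type, so identifying $\mathrm{coker}\,L$ requires care; following \cite{TWv10a} I would introduce the Hermitian metric $\hat\o$ obtained from the positive matrix $(a^{i\bar j})$ by inverting and applying a Gauduchon-type conformal normalization --- this metric is only Hermitian, never K\"ahler, even when $\chi$ and $\o$ are K\"ahler --- with respect to which $L$ becomes a positive multiple of the Chern Laplacian of $\hat\o$. Standard elliptic theory on the closed Hermitian manifold $(M,\hat\o)$ then makes $\mathrm{coker}\,L$ one-dimensional, spanned by a positive density; since $\psi_t>0$ pairs nontrivially with that density, the map $(u,b)\mapsto Q(\chi_u)-b\psi_t$, restricted to a slice of $C^{2,\b}$ transverse to the constants, has invertible linearization, and the inverse function theorem supplies a solution of \eqref{ma2-cont} for $t$ near $t_0$; elliptic bootstrapping makes it smooth, it is admissible by proximity to $u_0$, and an additive constant arranges $\sup_M u=0$.

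Hence $T=[0,1]$; in particular $1\in T$, which produces a smooth $u$ with $\chi_u>0$ and a constant $b>0$ satisfying $\chi_u^{\,n}=b\,\psi\,\chi_u^{\,n-\a}\wedge\o^\a$, i.e.\ the asserted solution of \eqref{ma2-main-equation} up to a constant multiple. The step I expect to be the main obstacle is the openness, and specifically the construction and use of $\hat\o$: verifying that $L$ is conformal to the Chern Laplacian of a genuine Hermitian metric and that the resulting Fredholm framework pairs the free constant $b$ exactly with the one-dimensional cokernel is the genuinely new analytic input, going beyond Theorem~\ref{ma2-theorem-estimate}. The other indispensable ingredient --- elementary, but without it the continuity method collapses --- is the persistence of the cone condition, for which \eqref{ma2-theorem-hermitian-condition} and the maximum principle are used as above.
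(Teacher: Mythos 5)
Your proposal is correct and follows essentially the same route as the paper: a continuity path from $\varphi=\chi^n/(\chi^{n-\a}\wedge\o^\a)$ to $\psi$, the maximum principle at a maximum point of $u_t$ combined with \eqref{ma2-theorem-hermitian-condition} to force the free constant to be $\le 1$ so that the right-hand side stays below $\psi$ and the cone condition persists, Theorem~\ref{ma2-theorem-estimate} for closedness, and the Gauduchon-normalized Hermitian metric built from the linearized operator for openness. The only differences (arithmetic rather than geometric interpolation of the right-hand side, a multiplicative constant $b$ in place of $e^{b_t}$) are immaterial.
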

Should we have more knowledge of the two metrics, there would be chances to obtain deeper results. When $\chi$ and $\omega$ are both K\"ahler, we are able to solve Donaldson's problem formerly proven by the flow method in \cite{SW08}, \cite{FLM11}. Since the cokernel is not fixed, it does not work to apply method of continuity directly. Instead, we technically set up an intermediate step and apply the method twice. 
\begin{theorem}
\label{ma2-theorem-kahler}
Let $(M^n,\omega)$ be a closed K\"ahler manifold of complex dimension $n$ and $\chi$ is also K\"ahler. Suppose that $\chi \in \mathscr{C}_\a (\psi)$ and $\psi \geq c$ for all $x \in M$,
where $c$ is defined in \eqref{ma2-kahler-constant}. Then there exists a unique admissible solution of equation \eqref{ma2-main-equation} up to a constant multiple.
\end{theorem}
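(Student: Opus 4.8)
The plan is to reduce the K\"ahler case to the Hermitian existence result of Theorem \ref{ma2-theorem-hermitian} by a two-step continuity argument, exploiting the fact that on a K\"ahler manifold the constant $c$ is the precisely correct value of $\psi$ for which the equation is solvable. The difficulty flagged in the introduction is that when we run the method of continuity with a path $\psi_t$ the cokernel of the linearization is not fixed, so we cannot apply the implicit function theorem directly; the remedy is to run the method twice, once to reach an auxiliary equation whose solvability we already know, and once to deform the auxiliary datum to $\psi$.

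First I would set up the intermediate step. Since $\psi \geq c$, consider for $t \in [0,1]$ the function $\psi_t := (1-t) \psi + t\, \tilde\psi$, where $\tilde\psi$ is chosen so that $\chi^n = \tilde\psi\, \chi^{n-\a} \wedge \omega^\a$ holds pointwise, i.e.\ $\tilde\psi = \chi^n / (\chi^{n-\a}\wedge\omega^\a)$; note $\tilde\psi \leq \psi$ by hypothesis, so condition \eqref{ma2-theorem-hermitian-condition} is satisfied at $t=1$ and Theorem \ref{ma2-theorem-hermitian} gives a solution there (indeed $u \equiv 0$ works at $t=1$). The set of $t$ for which \eqref{ma2-main-equation} with $\psi_t$ admits an admissible solution is nonempty. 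It is closed: along the path one checks $\chi \in \mathscr{C}_\a(\psi_t)$ persists because $\psi_t \leq \psi$ pointwise and the cone condition \eqref{ma2-definition-cone-condition} is monotone in $\psi$ (a smaller $\psi$ makes $n\chi'^{n-1} > (n-\a)\psi \chi'^{n-\a-1}\wedge\omega^\a$ easier), so Theorem \ref{ma2-theorem-estimate} supplies uniform $C^\infty$ bounds and Arzel\`a--Ascoli closes the argument. For openness, one linearizes: the linearized operator at an admissible solution is second-order elliptic, and on a K\"ahler manifold it is (after multiplying by a positive function) self-adjoint with respect to a suitable volume form, with kernel and cokernel the constants; since the solvability is only claimed \emph{up to a constant multiple of the right-hand side}, the one-dimensional cokernel is absorbed by allowing the free constant, and the implicit function theorem in H\"older spaces $C^{2,\beta}/\mathbb{R}$ yields openness. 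This produces, for the original $\psi$, an admissible solution $u$ together with a constant $b>0$ with $\chi_u^n = b\,\psi\,\chi_u^{n-\a}\wedge\omega^\a$.

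The remaining point is to show $b = 1$, i.e.\ that no spurious constant is needed. Here I would integrate: from $\chi_u^n = b\,\psi\,\chi_u^{n-\a}\wedge\omega^\a$ and the pointwise hypothesis $\psi \geq c$ we get $\int_M \chi_u^n = b\int_M \psi\,\chi_u^{n-\a}\wedge\omega^\a \geq bc \int_M \chi_u^{n-\a}\wedge\omega^\a$; but in the K\"ahler case the cohomological identities $\int_M \chi_u^n = \int_M \chi^n$ and $\int_M \chi_u^{n-\a}\wedge\omega^\a = \int_M \chi^{n-\a}\wedge\omega^\a$ hold (the difference is exact), so $\int_M \chi^n \geq bc\int_M \chi^{n-\a}\wedge\omega^\a$, which by the definition \eqref{ma2-kahler-constant} of $c$ forces $b \leq 1$. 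A matching reverse inequality — obtained by testing the equation at a maximum point of $u$, where $\frac{\sqrt{-1}}{2}\p\bpartial u \leq 0$ so $\chi_u \leq \chi$ and hence $b\psi(x_0) = \chi_u^n/(\chi_u^{n-\a}\wedge\omega^\a)|_{x_0} \geq$ (a quantity comparable to $\tilde\psi$), combined with the minimum-point estimate — pins down $b$, or more cleanly one argues that the continuity method can be threaded so that the constant stays identically $1$. I expect this normalization of the constant and the verification that the cokernel is exactly one-dimensional (so that quotienting by $\mathbb{R}$ restores a bijective linearization) to be the main obstacle; everything else is the by-now-standard combination of Theorem \ref{ma2-theorem-estimate}, Evans--Krylov, and Schauder. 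Uniqueness up to a constant multiple of the right-hand side follows from the maximum principle applied to the difference of two solutions in the usual way, as in Calabi's argument for the Monge--Amp\`ere case.
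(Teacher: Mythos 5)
Your overall architecture --- a two--stage continuity argument that first reaches an auxiliary equation solvable by Theorem \ref{ma2-theorem-hermitian} and then deforms to $\psi$, with the free constant controlled by integrating the equation and using the cohomological invariance of $\int_M\chi_u^n$ and $\int_M\chi_u^{n-\a}\wedge\omega^\a$ together with $\psi\geq c$ --- is the same as the paper's. But your first stage has a genuine gap: you assert that $\tilde\psi:=\chi^n/(\chi^{n-\a}\wedge\omega^\a)\leq\psi$ ``by hypothesis,'' so that condition \eqref{ma2-theorem-hermitian-condition} holds and Theorem \ref{ma2-theorem-hermitian} applies with background $\chi$. The hypothesis of Theorem \ref{ma2-theorem-kahler} is only the integral condition $\psi\geq c$, where $c$ is the ratio of the two integrals; it does \emph{not} imply the pointwise inequality $\tilde\psi\leq\psi$ (take $\psi\equiv c$ with $\tilde\psi$ nonconstant), and that pointwise inequality is exactly the extra hypothesis \eqref{ma2-theorem-hermitian-condition} that the K\"ahler theorem is meant to dispense with. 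Without it your starting point fails, and with it the theorem would be an immediate corollary of Theorem \ref{ma2-theorem-hermitian}.

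The paper circumvents this by not starting from $\chi$. It takes the cone--condition witness $\chi_{\ul u}\in[\chi]^+$, sets $\ul\varphi=\chi_{\ul u}^n/(\chi_{\ul u}^{n-\a}\wedge\omega^\a)$, notes that the cone condition for $\chi_{\ul u}$ automatically holds with $\ul\varphi$ (hence with $\max\{\psi,\ul\varphi\}+\delta$) in place of $\psi$, and perturbs to a smooth $v$ and a smooth $\psi_0\geq\max\{\psi,\ul\varphi\}+\delta/2$ for which \emph{both} $\chi_v\in\mathscr{C}_\a(\psi_0)$ and the pointwise condition $\chi_v^n/(\chi_v^{n-\a}\wedge\omega^\a)\leq\psi_0$ hold; Theorem \ref{ma2-theorem-hermitian} applied to this shifted background yields $u_0$ solving the equation with right--hand side $\psi_0e^{b_0}$. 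The second stage runs the path $\psi^t\psi_0^{1-t}e^{b_t}$ from $\psi_0$ down to $\psi$: since $\psi_0\geq\psi\geq c$, integration forces $b_t\leq 0$, so the right--hand side stays $\leq\psi_0$ and the single cone condition $\mathscr{C}_\a(\psi_0)$ furnishes uniform estimates along the entire path. Two further points. Your effort to show $b=1$ is both unnecessary (the theorem asserts solvability only up to a constant multiple, and in general $b\neq1$) and not salvageable as sketched. And openness is not obtained from self--adjointness of the linearization on a K\"ahler manifold; the paper's Section \ref{ma2-continuity-method} establishes it on general Hermitian manifolds via the auxiliary metric $\hat\Omega$, Gauduchon's theorem and Buchdahl's solvability result, which is the purpose of the normalization by $\int_M \frac{F(u_t)}{F(u_{\hat t})}e^{(n-1)\hat f}\hat\Omega^n$ in \eqref{ma2-continuity-method-time-local-equation-modified}.
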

It is worth a mention that the flow method solved the equation when $\psi = c$  while we only reuqire that $\psi \geq c$. 

This paper is organized as follows. In Section~\ref{ma2-preliminary}, we state some preliminary and necessray knowledge related to equation~\eqref{ma2-main-equation}. In Section~\ref{ma2-C2}, we establish the crucial sharp $C^2$ estimate. In Section~\ref{ma2-higher},  we study the higher order estimates and complete the proof of Theorem~\ref{ma2-theorem-estimate}. In Section~\ref{ma2-continuity-method}, we apply the new technique to method of continuity, and reduce the feasibility of the method to two simple conditions. In Section~\ref{ma2-solutions}, we solve equation~\eqref{ma2-main-equation} on general Hermitian manifolds and K\"ahler manifolds, respectively.


\bigskip

\section{Preliminary}
\label{ma2-preliminary}
\setcounter{equation}{0}
\medskip

We denote by $\nabla$ the Chern connection of $g$. As in \cite{GL10} and \cite{GL12}, in local coordinates $z = (z^1 , \cdots , z^n)$ we have
\begin{equation}
\label{ma2-formula-local-coordiante}
 g_{i\bar j} = g\left(\f{\p}{\p z^i} , \f{\p}{\p\bar z^j}\right) , \quad \big[g^{i\bar j}\big] = \big[g_{i\bar j}\big]^{-1} .
\end{equation}
Therefore, the coefficients of the connection are
\begin{equation}
\label{ma2-formula-connection-coefficient}
\left\{
	\begin{aligned}
   		\Gamma^i_{lj} &= \sum_m g^{i\bar m}\frac{\partial g_{j\bar m}}{\partial z^l} \,,\\
   		\Gamma^{\bar i}_{\bar l\bar j}&= \overline{\Gamma^i_{lj}} = \sum_m g^{m\bar i}\frac{\partial g_{m\bar j}}{\partial \bar z^l}\,. 
	\end{aligned}
\right.
\end{equation}
The torsion and curvature tensors of $\nabla$ are defined by
\begin{equation}
	\label{ma2-formula-1}
	\left\{
    \begin{aligned}
        T(X,Y) =& \nabla_X Y - \nabla_Y X  - [X,Y] , \\
        R(X,Y)Z =& \nabla_X \nabla_Y Z - \nabla_Y\nabla_X Z - \nabla_{[X,Y]} Z,
    \end{aligned}
    \right.
\end{equation}
respectively. In local coordinates, the coefficients are
\begin{equation}
	\label{ma2-formula-2}
    \left\{
    \begin{aligned}
        T^k_{ij} =& \G^k_{ij} - \G^k_{ji} = \sum_l g^{k\bar l} \left(\f{\p g_{j\bar l}}{\p z^i} - \frac{\p g_{i\bar l}}{\p z^j}\right) \,,  \\
        R_{i\bar jk\bar l} =& - \sum_m g_{m\bar l} \f{\p\G^m_{ik}}{\p\bar z^j} = - \frac{\p^2 g_{k\bar l}}{\p z^i \p \bar z^j} + \sum_{p,q} g^{p\bar q} \f{\p g_{k\bar q}}{\p z^i} \f{\p g_{p\bar l}}{\p\bar z^j} .
    \end{aligned}
    \right.
\end{equation}

Let us consider the convariant derivatives of $\chi_u$. For convenince, we express
\begin{equation}
\label{int:definition-X}
X := \chi_u = \chi+\frac{\sqrt{-1}}{2}\partial\bar\partial u \,,
\end{equation}
and thus
\begin{equation}
\label{int:definition-X-coefficients}
X_{i\bar j} = \chi_{i\bar j} + \bpartial_j\p_i u\,.
\end{equation}
Also, we denote the coefficients of $X^{-1}$ by $X^{i\bar j}$.

It is easy to see that
\begin{equation}
\label{ma2-X-covariant-1}
    	\overline{X_{i\bar jk}} = X_{j\bar i\bar k} \,.
\end{equation}
Assuming at the point $p$,  $g_{i\bar j} = \delta_{ij}$ and $X_{i\bar j}$ is diagonal in a specific chart.  For later reference, we call such local coordinates {\em normal} coordinate charts. Therefore,
\begin{equation}
\label{ma2-formula-X-1}
\begin{split}
     	X_{i\bar ij\bar j} - X_{j\bar ji\bar i} &= R_{j\bar ji\bar i}X_{i\bar i}  -  R_{i\bar ij\bar j}X_{j\bar j} + 2 \mathfrak{Re} \Big\{\sum_p \overline{T^p_{ij}}X_{i\bar pj}\Big\} \\
	&\hspace{6em}- \sum_{p} T^p_{ij} \overline{T^p_{ij}} X_{p\bar p} - G_{i\bar ij\bar j},\\
\end{split}
\end{equation}
where
\begin{equation}
\label{ma2-formula-G-coefficient}
\begin{aligned}
    	G_{i\bar ij\bar j} &= \chi_{j\bar ji\bar i} - \chi_{i\bar ij\bar j} + \sum_p R_{j\bar ji\bar p}\chi_{p\bar i} -\sum_p R_{i\bar ij\bar p}\chi_{p\bar j}  \\
    	&\hspace{3em} + 2\mathfrak{Re}\Big\{\sum_p \overline{T^p_{ij}}\chi_{i\bar pj} \Big\} - \sum_{p,q}T^p_{ij}\overline{T^q_{ij}}\chi_{p\bar q}\,.
\end{aligned}
\end{equation}

Let $S_\a (\lambda)$ denote the $\a$-th elementary symmetric polynomial of $\lambda \in \bfR^n$,
\begin{equation}
	S_\a (\lambda) = \sum_{1 \leq i_1 < \cdots < i_\a \leq n} \lambda_{i_1} \cdots \lambda_{i_\a} \,.
\end{equation}
For a square matrix $A$, define $S_\a (A) = S_\a (\lambda(A))$ where $\lambda(A)$ denote the eigenvalues of $A$. Further, write $S_\a (X) = S_\a (\lambda_* (X))$ and $S_\a (X^{-1}) = S_\a (\lambda^* (X))$ where $\lambda_* (X)$ and $\lambda^* (X)$ denote the eigenvalues of a Hermitian matrix $X$ with respect to $\omega$ and to $\omega^{-1}$, respectively. Unless otherwise indicated we shall use $S_\alpha$ to denote $S_\alpha(X^{-1})$ when no possible confusion would occur. In local coordinates, we can write the complex Monge-Amp\`ere type equations~\eqref{ma2-main-equation} in the form
\begin{equation}
   	\frac{S_n (\chi_u)}{S_{n-\a} (\chi_u)} =  \frac{\psi}{C^\a_n}  	,
\end{equation}
or equivalently,
\begin{equation}
\label{ma2-equation-equivalent}
    	S_\a (\chi^{-1}_u)=  \frac{C^\a_n}{\psi} .
\end{equation}

As in \cite{GSun12}, differentiating the equation twice and applying the strong concavity of $S_\a$ \cite{GLZ}, we have
\begin{equation}
\label{ma2-formula-S-partial}
\begin{aligned}
    	C^\a_n \partial_l (\psi^{-1})  = \; -\sum_i S_{\alpha-1;i} (X^{i\bar i})^2 X_{i\bar il} 
\end{aligned}
\end{equation}
and
\begin{equation}
\label{ma2-formula-S-double-derivative}
\begin{aligned}
 	 C^\a_n \bar\partial_l\partial_l (\psi^{-1}) \geq   \sum_{i,j} S_{\alpha -1;i}  (X^{i\bar i})^2 X^{j\bar j}X_{j\bar i\bar l}X_{i\bar j l} - \sum_i S_{\alpha -1;i}(X^{i\bar i})^2 X_{i\bar il\bar l} ,
\end{aligned}
\end{equation}
where for $\{i_1,\cdots,i_s\} \subseteq \{1,\cdots,n\}$,
\begin{equation}
 	S_{k;i_1\cdots i_s} (\lambda) = S_k (\lambda|_{\lambda_{i_1} = \cdots = \lambda_{i_s}= 0}).
\end{equation}

Suppose  $\chi_{\ul u} \in [\chi]$ is the Hermitian form satisfying the cone condition in $\mathscr{C}_{\alpha} (\psi) $,
\begin{equation}
\label{ma2-cone-condition-function}
  	n \chi_{\underline u}^{n-1} >  (n-\alpha) \psi \chi_{\underline u}^{n-\alpha-1} \wedge \omega^{\alpha} .
\end{equation}
Choosing a local chart such that $g_{i\bar j}(p) = \delta_{ij}$ at a fixed point $p$, inequality~\eqref{ma2-cone-condition-function} is equivalent to
\begin{equation}
  	\frac{C^\alpha_n}{\psi} > S_{\alpha ;k}(\chi^{-1}_{\underline u}) 
\end{equation}
for all $k$.

We may assume
\begin{equation}
    \epsilon\omega \leq \chi_{\ul u} \leq \epsilon^{-1} \omega
\end{equation}
for some $\epsilon > 0$.

The following lemma is the key to construct the barrier function in the second order estimate. An equivalent form of Lemma~\ref{ma2-lemma-alternative} and its proof are given in \cite{FLM11}.
\begin{lemma}
\label{ma2-lemma-alternative}
There exist constants $N, \theta > 0$ such that when $w \geq N$
at a point $p$ where
$g_{i\bj} = \delta_{ij}$ and $X_{i\bj}$ is diagonal,
\begin{equation}
\label{ma2-inequality-barrier-function-key}
\sum_i S_{\alpha -1;i} (X^{i\bar i})^2 (\ul u_{i\bar i} - u_{i\bar i})
\geq \theta \sum_i S_{\alpha -1;i} (X^{i\bar i})^2 + \theta .
\end{equation}
\end{lemma}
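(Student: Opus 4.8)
The plan is to reduce the inequality to a statement about the elementary symmetric polynomials at the eigenvalue level and then exploit the cone condition for $\chi_{\underline u}$. Work in the normal coordinate chart at $p$, so that $g_{i\bar j} = \delta_{ij}$ and $X_{i\bar j} = \lambda_i \delta_{ij}$ with $\lambda_i > 0$; then $X^{i\bar i} = \lambda_i^{-1}$ and $S_{\alpha-1;i}$ is evaluated at the eigenvalues $\lambda^* = (\lambda_1^{-1}, \ldots, \lambda_n^{-1})$ of $X^{-1}$. Since $\ul u_{i\bar i} - u_{i\bar i} = (\chi_{\underline u})_{i\bar i} - X_{i\bar i}$, the left-hand side becomes $\sum_i S_{\alpha-1;i}(\lambda^*)\,\lambda_i^{-2}\big((\chi_{\underline u})_{i\bar i} - \lambda_i\big)$. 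Using the standard identity $\sum_i S_{\alpha-1;i}(\lambda^*)\,\lambda_i^{-1} = \alpha S_\alpha(\lambda^*) = \alpha S_\alpha = \alpha C^\alpha_n/\psi$ (from \eqref{ma2-equation-equivalent}), the term with $-\lambda_i$ contributes exactly $-\alpha C^\alpha_n/\psi$, which is a bounded quantity. So it suffices to produce a lower bound of the form
\begin{equation}
\label{ma2-proposal-reduction}
\sum_i S_{\alpha-1;i}(\lambda^*)\,\lambda_i^{-2}\,(\chi_{\underline u})_{i\bar i} \;\geq\; 2\theta \sum_i S_{\alpha-1;i}(\lambda^*)\,\lambda_i^{-2} \;+\; 2\theta \;+\; \frac{\alpha C^\alpha_n}{\psi},
\end{equation}
after which \eqref{ma2-inequality-barrier-function-key} follows by absorbing the constant $\alpha C^\alpha_n/\psi$ (bounded since $\psi$ is a fixed positive function) into the right-hand side at the cost of halving $\theta$ and using $w \geq N$ to dominate lower-order contributions.

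Next I would bring in the cone condition. Since $\epsilon\omega \leq \chi_{\underline u} \leq \epsilon^{-1}\omega$, in the chosen chart the matrix $(\chi_{\underline u})_{i\bar i}$ is comparable to the identity but is \emph{not} diagonal in general; however, its diagonal entries satisfy $(\chi_{\underline u})_{i\bar i} \geq \epsilon$, and more to the point the cone condition $C^\alpha_n/\psi > S_{\alpha;k}(\chi_{\underline u}^{-1})$ holds for every $k$. The key observation is that $S_{\alpha;k}(\lambda^*) = S_\alpha(\lambda^*) - \lambda_k^{-1} S_{\alpha-1;k}(\lambda^*)$, so the cone-type inequality we want to exploit, applied at the eigenvalues $\lambda^*$ of $X^{-1}$, is that there is a uniform gap: using the concavity/openness of the cone condition and compactness in the relevant directions, one gets $S_{\alpha;k}(\lambda^*) \leq (1 - 2n\theta)\,C^\alpha_n/\psi$ whenever some $\lambda_k$ is large, because then $\lambda^*$ is close to the boundary stratum $\{\lambda_k^* = 0\}$ where the cone condition for $\chi_{\underline u}$ becomes the relevant constraint. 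This is exactly the mechanism used in \cite{FLM11}: the cone condition is an open condition, so it persists with a uniform $\theta$ after passing to limits, and it is precisely what forces a definite gap between $\sum_i S_{\alpha-1;i}(\lambda^*)\lambda_i^{-1}(\chi_{\underline u})_{i\bar i}$ and $\sum_i S_{\alpha-1;i}(\lambda^*)\lambda_i^{-1}\cdot$(a scalar comparable to $1$).

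To close the argument I would distinguish the regime where $w = \Delta u + \operatorname{tr}\chi$ (equivalently $\sum_i \lambda_i$) is large. If $w \geq N$ with $N$ large, then at least one $\lambda_i$ — say $\lambda_1$ — is large, so $\lambda_1^{-1}$ is small and the eigenvalue vector $\lambda^*$ is near the face $\{\lambda^*_1 = 0\}$ of the positive cone; by continuity of $S_\alpha$ and the strict cone condition \eqref{ma2-cone-condition-function} there, one obtains the required gap \eqref{ma2-proposal-reduction} with a uniform $\theta$, the term $2\theta\sum_i S_{\alpha-1;i}(\lambda^*)\lambda_i^{-2}$ on the right being controlled by the corresponding sum appearing with an extra factor of $(\chi_{\underline u})_{i\bar i} \geq \epsilon$ on the left, and the additive $+2\theta$ coming from the strict positivity of the gap at the boundary face. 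The main obstacle, and the step requiring the most care, is making the passage to the boundary stratum \emph{uniform}: one must ensure that the gap constant $\theta$ does not degenerate as the large eigenvalue $\lambda_1 \to \infty$ and as $p$ ranges over $M$ and the off-diagonal part of $(\chi_{\underline u})_{i\bar i}$ in the normal chart varies. This is handled by the compactness of $M$ together with the fact that the cone condition $C^\alpha_n/\psi > S_{\alpha;k}(\chi_{\underline u}^{-1})$ is a closed-manifold statement giving a strictly positive minimum of $C^\alpha_n/\psi - S_{\alpha;k}(\chi_{\underline u}^{-1})$ over $M$; I would set $\theta$ proportional to this minimum. As the excerpt notes, an equivalent form of this lemma and its proof already appear in \cite{FLM11}, so I expect the argument to match theirs after the eigenvalue reduction above.
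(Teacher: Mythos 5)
The paper does not actually prove Lemma~\ref{ma2-lemma-alternative}; it states that an equivalent form and its proof are in \cite{FLM11}, so there is no in-paper argument to match step by step. Your opening reduction is correct and is the standard first move: with $X_{i\bar i}=\lambda_i$ one has $(X^{i\bar i})^2X_{i\bar i}=\lambda_i^{-1}$, and the Euler identity $\sum_i\lambda_i^{-1}S_{\alpha-1;i}(\lambda^*)=\alpha S_\alpha(\lambda^*)=\alpha C^\alpha_n/\psi$ converts the claim into a lower bound for $\sum_i S_{\alpha-1;i}(X^{i\bar i})^2(\chi_{\ul u})_{i\bar i}$, while $(\chi_{\ul u})_{i\bar i}\geq\epsilon$ handles the term $\theta\sum_i S_{\alpha-1;i}(X^{i\bar i})^2$.

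The genuine gap is in your ``key observation.'' You assert a uniform gap $S_{\alpha;k}(\lambda^*)\leq(1-2n\theta)\,C^\alpha_n/\psi$ whenever $\lambda_k$ is large. This is backwards: since $S_{\alpha;k}(\lambda^*)=S_\alpha(\lambda^*)-\lambda_k^*S_{\alpha-1;k}(\lambda^*)$ and $\lambda_k^*=\lambda_k^{-1}\to 0$ as $\lambda_k\to\infty$ (with the remaining eigenvalues controlled), $S_{\alpha;k}(\lambda^*)$ tends \emph{to} $S_\alpha(\lambda^*)=C^\alpha_n/\psi$ rather than staying uniformly below it. More fundamentally, the cone condition constrains $\chi_{\ul u}$, i.e.\ $S_{\alpha;k}(\chi_{\ul u}^{-1})$; it gives no information about $S_{\alpha;k}(X^{-1})$, and one cannot assume any nondegeneracy of $X$ --- ruling out degenerating $X$ is precisely what the lemma is for. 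Note also that the inequality genuinely fails without the cone condition (take $\chi_{\ul u}=\epsilon\omega$ with $\epsilon$ small: then the left side is $\epsilon\sum_i S_{\alpha-1;i}(X^{i\bar i})^2$, which need not dominate the additive term $\theta+\alpha C^\alpha_n/\psi$), so the bound $(\chi_{\ul u})_{i\bar i}\geq\epsilon$ alone cannot close the argument. The missing step --- coupling the diagonal entries of $\chi_{\ul u}$ to $\lambda$ through the weights $S_{\alpha-1;i}(\lambda^*)(\lambda_i^*)^2$, via the concavity of $A\mapsto -S_\alpha(A^{-1})$ and a compactness/contradiction argument as $\lambda_{\max}\to\infty$ using the strict positivity of $C^\alpha_n/\psi-S_{\alpha;k}(\chi_{\ul u}^{-1})$ on $M$ --- is the entire content of the lemma, and your sketch replaces it with an intermediate claim that is false as stated.
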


\bigskip

\section{The second order estimate}
\label{ma2-C2}
\setcounter{equation}{0}
\medskip
In this section, we prove the sharp second order estimate.
\begin{proposition}
 Let $u\in C^4(M)$ be a solution of equation \eqref{ma2-main-equation} and $w = \Delta u + tr\chi$. Then there are uniform positive constants $C$ and $A$ such that
\begin{equation}
\label{ma2-C2-1}
\sup_{M} w \leq C e^{A (u - \inf_M u)},
\end{equation}
where $C$ depends on the given geometric quantities.
\end{proposition}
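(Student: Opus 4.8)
The plan is to use the standard maximum principle argument for second-order estimates of complex Monge–Ampère type equations, applied to a suitably chosen test function, but carefully arranged so that the final bound involves only $u - \inf_M u$ in the exponent rather than iterated exponentials. I would consider the quantity
\[
Q = \log w + \phi(|\partial u|^2) - A(u - \inf_M u),
\]
where $w = \Delta u + \tr\chi$ (which is positive by ellipticity and the cone condition, after shifting by a controlled constant), $A > 0$ is a large constant to be chosen, and $\phi$ is an auxiliary function of the gradient designed to absorb the troublesome first-derivative terms — a common choice being $\phi(t) = -\frac12\log(K - t)$ for a constant $K$ bounded in terms of $\sup_M |\partial u|^2$, or something similar that yields $\phi' > 0$, $\phi'' > 0$ with a good ratio $\phi''/(\phi')^2$. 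Let $p$ be the point where $Q$ attains its maximum, and choose a normal coordinate chart at $p$ as in the excerpt so that $g_{i\bar j} = \delta_{ij}$ and $X_{i\bar j}$ is diagonal.

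The main computation is to apply the linearized operator $\mathcal{L} = \sum_i S_{\alpha-1;i}(X^{i\bar i})^2 \,\partial_i\bar\partial_i$ (the linearization of $S_\alpha(\chi_u^{-1})$, up to sign and normalization) to $Q$ and use $\mathcal{L}Q \le 0$ at $p$, together with $\partial Q = 0$. Differentiating $w = \sum_k X_{k\bar k}$ twice and commuting derivatives via the curvature/torsion identity~\eqref{ma2-formula-X-1}, the third-order terms $X_{i\bar j l}$ get controlled by the concavity inequality~\eqref{ma2-formula-S-double-derivative}; the remaining bad terms are (i) the curvature and torsion contributions $R_{j\bar j i\bar i}$, $T^p_{ij}$, $G_{i\bar i j\bar j}$ from commuting, and (ii) cross terms $2\mathfrak{Re}\{\sum_p \overline{T^p_{ij}} X_{i\bar p j}\}$ that are linear in the third derivatives and hence a priori dangerous. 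Here is where the Phong–Sturm trick mentioned in the introduction enters: those mixed torsion–third-derivative terms must be reorganized — typically by adding an extra first-order term to the test function or by splitting the sum and using the gradient identity $\partial_i u_{j\bar j} = \partial_j u_{i\bar j} + (\text{torsion})$ — so that they can be absorbed into the good negative term coming from $\phi''|\partial_i u|^2$ and into $\mathcal{L}(-Au)$, without ever needing the extra condition $d\chi\wedge\omega^{n-2}=0$. The term $\mathcal{L}(-A(u - \inf_M u)) = -A\sum_i S_{\alpha-1;i}(X^{i\bar i})^2(\chi_{i\bar i} - X_{i\bar i})$ contributes $A\sum_i S_{\alpha-1;i}(X^{i\bar i})^2 X_{i\bar i} - A\sum_i S_{\alpha-1;i}(X^{i\bar i})^2 \chi_{i\bar i}$; the first piece, being roughly $A\cdot(\text{something comparable to }w\cdot\text{trace of the linearization})$, is the positive reservoir that dominates all the error terms once $A$ is chosen large, while the concavity and Lemma~\ref{ma2-lemma-alternative} are not directly needed here — rather Lemma~\ref{ma2-lemma-alternative} would be invoked via the subsolution $\ul u$ if one instead used $\mathcal{L}(\ul u - u)$; I would likely combine both: use $-A(u-\inf_M u)$ for the sharp exponent and, where the structure of the linearization degenerates, fall back on Lemma~\ref{ma2-lemma-alternative} with a fixed small coefficient.

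Putting the inequality together at $p$, one obtains $w(p) \le C + C e^{A(u(p) - \inf_M u)}$ after using $\phi$ bounded and the gradient bound (which itself follows from a Moser iteration / blow-up argument, or can be fed in as known), and since $Q$ is maximized at $p$, for any $x\in M$,
\[
\log w(x) \le Q(x) + A(u(x) - \inf_M u) - \phi \le Q(p) + A(u(x)-\inf_M u) - \phi \le C + A(u(x) - \inf_M u),
\]
giving $\sup_M w \le C e^{A(u - \inf_M u)}$ as claimed. The main obstacle is step two: correctly bookkeeping the torsion-coupled third-order terms so that the Phong–Sturm rearrangement closes the estimate on a general Hermitian manifold, and ensuring that the constant $A$ produced is genuinely uniform (depending only on $\omega$, $\chi$, $\psi$, and the cone-condition data through $\epsilon$, $N$, $\theta$) and in particular independent of $\inf_M u$ — which is exactly what makes the estimate sharp enough to later yield the $C^0$ bound à la Tosatti–Weinkove.
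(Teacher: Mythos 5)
There are two genuine gaps, both at the heart of why the paper's argument is structured the way it is.

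First, your ``positive reservoir'' is not actually large. For this equation the linearized operator is built from $S_{\alpha-1;i}(X^{i\bar i})^2$, and the term you propose to rely on satisfies $\sum_i S_{\alpha-1;i}(X^{i\bar i})^2 X_{i\bar i}=\sum_i S_{\alpha-1;i}X^{i\bar i}=\alpha\,S_\alpha(X^{-1})=\alpha C^\alpha_n/\psi$, which is a \emph{bounded} quantity; it is nothing like $A\cdot w\cdot\sum_i S_{\alpha-1;i}(X^{i\bar i})^2$ and cannot dominate the error terms, which are of size $Cw\sum_i S_{\alpha-1;i}(X^{i\bar i})^2$. This is exactly why the cone condition and Lemma~\ref{ma2-lemma-alternative} are indispensable: the paper's barrier is $-A(u-\ul u)$ with $\ul u$ the subsolution, so that the leading term $Aw\sum_i S_{\alpha-1;i}(X^{i\bar i})^2(\ul u_{i\bar i}-u_{i\bar i})$ is bounded below by $A\theta w\bigl(\sum_i S_{\alpha-1;i}(X^{i\bar i})^2+1\bigr)$ once $w\geq N$, and one chooses $A\theta>C$. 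Your suggestion to ``fall back on Lemma~\ref{ma2-lemma-alternative} with a fixed small coefficient'' does not work, since the lemma must carry the large constant $A$ to beat the error terms. Note also that using $u-\ul u$ in the exponent costs nothing: since $\ul u$ is a fixed smooth function, $Ce^{A(u-\ul u)-A\inf_M(u-\ul u)}\leq Ce^{A(u-\inf_M u)}$, so the sharp form of the estimate does not require putting $u-\inf_M u$ into the test function.

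Second, your auxiliary function $\phi(|\partial u|^2)$ with $K$ controlled by $\sup_M|\partial u|^2$ presupposes a gradient bound, which you acknowledge feeding in ``as known.'' That is circular at this stage: no $C^0$ or $C^1$ estimate is available before this proposition — indeed the whole point of the sharp bound $\sup_M w\leq Ce^{A(u-\inf_M u)}$ is that it is proved first and then used to derive the $C^0$ estimate \`a la Tosatti--Weinkove. The paper's version of the Phong--Sturm trick is precisely the replacement for your $\phi(|\partial u|^2)$: one adds the term $\bigl(u-\ul u-\inf_M(u-\ul u)+1\bigr)^{-1}$ to the exponent, whose complex Hessian produces the positive term $2|\partial_i(u-\ul u)|^2/\bigl(u-\ul u-\inf_M(u-\ul u)+1\bigr)^3$; after substituting $\bar\partial_i w=-w\bar\partial_i\phi$ from the critical-point equation, this absorbs the torsion cross terms $\frac{2}{w}\sum S_{\alpha-1;i}(X^{i\bar i})^2\mathfrak{Re}\{\hat T^k_{ji}\chi_{k\bar j}\bar\partial_i w\}$ by Cauchy--Schwarz, with no gradient bound whatsoever (at the price of the case split $w\gtrless A(u-\ul u-\inf_M(u-\ul u)+1)^{3/2}$ at the end, which only degrades the final constant harmlessly). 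Your outline of the third-order bookkeeping via the completed square and the concavity inequality~\eqref{ma2-formula-S-double-derivative} is consistent with the paper, but as written the proof does not close.
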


\begin{proof}
Let us consider the function $ e^\phi w$ where $\phi$ is to be specified later. Suppose that $e^\phi w$ attains its maximal value at some point $p\in M$. Choose a local chart near $p$ such that $g_{i\bar j} = \delta_{ij}$ and $X_{i\bar j}$ is diagonal at $p$. Therefore, at the point $p$, we have
\begin{equation}
\label{ma2-C2-prop-eq-max-1-1}
\frac{\partial_l w}{w} + \partial_l\phi = 0 ,
\end{equation}
\begin{equation}
\label{ma2-C2-prop-eq-max-1-2}
\frac{\bar\partial_l w}{w} + \bar\partial_l\phi = 0 ,
\end{equation}
and
\begin{equation}
\label{ma2-C2-prop-eq-max-2}
\frac{\bar\partial_l\partial_l w}{w} - \frac{\bar\partial_l w\partial_l w}{w^2} + \bar\partial_l\partial_l\phi \leq 0 .
\end{equation}

It is easy to see that
\begin{equation}
\label{ma2-C2-prop-formula-1}
  	\partial_l w = \sum_i X_{i\bar i l} ,
\end{equation}
and
\begin{equation}
\label{ma2-C2-prop-formula-2}
  	\bar\partial_l\partial_l w = \sum_i X_{i\bar i l\bar l} .
\end{equation}

By \eqref{ma2-formula-X-1} and \eqref{ma2-formula-S-double-derivative}, we have the following inequality at the point $p$,
\begin{equation}
\label{ma2-C2-inequality-max-2}
\begin{aligned}
    0 \geq& - 2\sum_{i,j,l} S_{\a -1;i}(X^{i\bar i})^2  \fRe\{\ol{T^j_{il}}X_{i\bar j l}\} + \sum_{i,j,l} S_{\a -1;i} (X^{i\bar i})^2 X^{j\bar j} X_{j\bar i\bar l} X_{i\bar jl} \\
    & + \sum_{i,j,l} S_{\a -1;i} (X^{i\bar i})^2  T^j_{il} \ol{T^j_{il}}X_{j\bar j} - \frac{1}{w}\sum_{i} S_{\a -1;i} (X^{i\bar i})^2 |\p_i w|^2 \\
    & - C + \sum_{i,j} S_{\a -1;i} (X^{i\bar i})^2 \Big( - R_{j\bar ji\bar i}X_{i\bar i} + R_{i\bar ij\bar j}X_{j\bar j}  + G_{i\bar ij\bar j}\Big) \\
    & + w\sum_i S_{\a -1;i} (X^{i\bar i})^2 \bpartial_i\p_i \phi.
\end{aligned}
\end{equation}

As in \cite{Cherrier87} and \cite{TWv11}, direct calculation shows that
\begin{equation}
\begin{aligned}
     	 \Big|X_{i\bar jl} - X_{j\bar l}&\frac{\p_i w}{w} - T^j_{il} X_{j\bar j}\Big|^2 = X_{i\bar jl} X_{j\bar i\bar l} + X_{j\bar l} X_{l\bar j} \frac{|\p_i w|^2}{w^2} + T^j_{il} \ol{T^j_{il}} X^2_{j\bar j} \\
	& - 2 \mathfrak{Re}\{X_{i\bar jl} X_{l\bar j} \frac{\bpartial_i w}{w}\} - 2 X_{j\bar j}\mathfrak{Re}\{X_{i\bar jl} \ol{T^j_{il}} \} + 2 X_{j\bar j}\mathfrak{Re}\{X_{j\bar l}\frac{\p_i w}{w} \ol{T^j_{il}} \} .
\end{aligned}
\end{equation}
Note also that
\begin{equation}
\label{ma2-C2-formula-X-index-1}
    	X_{j\bar ji} - X_{i\bar jj} = \hat{T}^k_{ij} \chi_{k\bar j} - T^k_{i\bar j} X_{k\bar j}, 
\end{equation}
where $\hat{T}$ denotes the torsion with respect to the Hermitian metric $\chi$. We compute the sum straightforward and obtain,
\begin{equation}
\label{ma2-C2-formula-sum-1}
\begin{aligned}
    	&\, \sum_{i,j,l} S_{\a - 1;i} (X^{i\bar i})^2 X^{j\bar j}  \Big|X_{i\bar jl} - X_{j\bar l}\frac{\p_i w}{w} - T^j_{il} X_{j\bar j}\Big|^2 \\
    	=& \sum_{i,j,l} S_{\a - 1;i} (X^{i\bar i})^2 X^{j\bar j} X_{i\bar jl} X_{j\bar i\bar l} -  \frac{1}{w} \sum_{i} S_{\a - 1;i} (X^{i\bar i})^2 {|\p_i w|^2} \\
    	& +  \sum_{i,j,l} S_{\a - 1;i} (X^{i\bar i})^2  T^j_{il} \ol{T^j_{il}} X_{j\bar j} - 2  \sum_{i,j} S_{\a - 1;i} (X^{i\bar i})^2 \mathfrak{Re}\{ \hat{T}^k_{ji} \chi_{k\bar j} \frac{\bpartial_i w}{w}\} \\
    	& - 2  \sum_{i,j,l} S_{\a - 1;i} (X^{i\bar i})^2 \mathfrak{Re}\{X_{i\bar jl} \ol{T^j_{il}} \}. 
\end{aligned}
\end{equation}
Equation \eqref{ma2-C2-formula-sum-1} implies that
\begin{equation}
\label{ma2-C2-formula-sum-2}
\begin{aligned}
    	&\,\, \frac{2}{w} \sum_{i,j} S_{\a - 1;i} (X^{i\bar i})^2 \mathfrak{Re}\{ \hat{T}^k_{ji} \chi_{k\bar j} {\bpartial_i w}\}   \\
    	\leq&\, \sum_{i,j,l} S_{\a - 1;i} (X^{i\bar i})^2 X^{j\bar j} X_{i\bar jl} X_{j\bar i\bar l} -  \frac{1}{w} \sum_{i,j,l} S_{\a - 1;i} (X^{i\bar i})^2 {|\p_i w|^2} \\
    	& +  \sum_{i,j,l} S_{\a - 1;i} (X^{i\bar i})^2  T^j_{il} \ol{T^j_{il}} X_{j\bar j}  - 2  \sum_{i,j,l} S_{\a - 1;i} (X^{i\bar i})^2 \mathfrak{Re}\{X_{i\bar jl} \ol{T^j_{il}} \} .
\end{aligned}
\end{equation}

Combining \eqref{ma2-C2-inequality-max-2} and \eqref{ma2-C2-formula-sum-2}, we have
\begin{equation}
\label{ma2-C2-inequality-max-2-1}
\begin{aligned}
    0 \geq&\,  - C + \sum_{i,j} S_{\a -1;i} (X^{i\bar i})^2 \Big( - R_{j\bar ji\bar i}X_{i\bar i} + R_{i\bar ij\bar j}X_{j\bar j}  + G_{i\bar ij\bar j}\Big) \\
    & + w\sum_i S_{\a -1;i} (X^{i\bar i})^2 \bpartial_i\p_i \phi + \frac{2}{w} \sum_{i,j} S_{\a - 1;i} (X^{i\bar i})^2 \mathfrak{Re}\{ \hat{T}^k_{ji} \chi_{k\bar j} {\bpartial_i w}\}  .
\end{aligned}
\end{equation}

By applying a trick due to Phong and Sturm \cite{PhongSturm10}, we use the function
\begin{equation}
\label{ma2-C2-test-function}
    	\phi := - A ( u - \ul u) + \frac{1}{u - \ul u - \inf_M (u - \ul u) + 1} .
\end{equation}
Without loss of generality, we assume $C, A \gg 1$ throughout this section. 

It is easy to see that
\begin{equation}
\label{ma2-C2-test-function-first-derivative}
	\p_i \phi = - A ( \p_i u - \p_i \ul u) - \frac{\p_i u - \p_i\ul u}{(u - \ul u - \inf_M (u - \ul u) + 1)^2} 
\end{equation}
and
\begin{equation}
\label{ma2-C2-test-function-second-derivative}
\begin{aligned}
    	\bpartial_i \p_i \phi =& - A (\bpartial_i\p_i u - \bpartial_i\p_i \ul u) -  \frac{\bpartial_i\p_i u - \bpartial_i\p_i \ul u}{(u - \ul u - \inf_M (u - \ul u) + 1)^2} \\
    	&\hspace{3em} + \frac{2 \p_i(u - \ul u)\bpartial_i(u - \ul u)}{(u - \ul u - \inf_M (u - \ul u) + 1)^3} . 
\end{aligned}
\end{equation}
Then, the third term in \eqref{ma2-C2-inequality-max-2-1} turns to 
\begin{equation}
\begin{aligned}
     	 w &\sum_i S_{\a -1;i} (X^{i\bar i})^2 \bpartial_i\p_i \phi = w \sum_i S_{\a -1;i} (X^{i\bar i})^2 \frac{2 \p_i(u - \ul u)\bpartial_i(u - \ul u)}{(u - \ul u - \inf_M (u - \ul u) + 1)^3} \\
     	 &- \Big( Aw + \frac{w}{(u - \ul u - \inf_M (u - \ul u) + 1)^2}  \Big)\sum_i S_{\a -1;i} (X^{i\bar i})^2(\bpartial_i\p_i u-  \bpartial_i\p_i \ul u)    ;
\end{aligned}
\end{equation}
and the fourth term is
\begin{equation}
\begin{aligned}
    	 \frac{2}{w} \sum_{i,j} S_{\a - 1;i} &(X^{i\bar i})^2 \mathfrak{Re}\{ \hat{T}^k_{ji} \chi_{k\bar j} {\bpartial_i w}\}   =  - 2  \sum_{i,j} S_{\a - 1;i} (X^{i\bar i})^2 \mathfrak{Re}\{\hat{T}^k_{ji} \chi_{k\bar j} {\bpartial_i \phi}\} \\
    	\geq&\, -\frac{w}{(u - \ul u - \inf_M(u - \ul u) + 1)^3} \sum_i S_{\a -1;i} (X^{i\bar i})^2 |\p_i (u - \ul u)|^2 \\
    	&\hspace{3em} - C A^2 \frac{(u  - \ul u - \inf_M(u - \ul u) + 1)^3}{w} \sum_i S_{\a -1;i} (X^{i\bar i})^2 .
\end{aligned}
\end{equation}
Therefore,
\begin{equation}
\label{ma2-C2-inequality-max-2-2}
\begin{aligned}
    	0  &\geq  \Big( Aw + \frac{w}{(u - \ul u - \inf_M (u - \ul u) + 1)^2} \Big)  \sum_i S_{\a -1;i} (X^{i\bar i})^2(\bpartial_i\p_i \ul u - \bpartial_i\p_i  u) \\
	&\hspace{3em} - C A^2 \frac{(u  - \ul u - \inf_M(u - \ul u) + 1)^3}{w} \sum_i S_{\a -1;i} (X^{i\bar i})^2 \\
	&\hspace{6em} - C w \sum_i S_{\a - 1;i} (X^{i\bar i})^2 - C .
\end{aligned}
\end{equation}

For $A \gg 1$ which is to be determined later, there are two cases in consideration:
(1) $w > A(u  - \ul u - \inf_M(u - \ul u) + 1)^{\frac{3}{2}} \geq A > N$, where $N$ is the crucial constant in Lemma \ref{ma2-lemma-alternative};
(2) $w \leq A(u  - \ul u - \inf_M(u - \ul u) + 1)^{\frac{3}{2}}$ .

In the first case, by Lemma \ref{ma2-lemma-alternative},
\begin{equation}
    	0 \geq   A w \theta \Big(\sum_i S_{\a -1;i} (X^{i\bar i})^2 + 1\Big) - C w \sum_i S_{\a -1;i} (X^{i\bar i})^2 - C .
\end{equation}
This gives a bound $w \leq 1$ at $p$ if we choose $A\theta > C $. It contradicts the assumption $A \gg 1$.

In the second case, 
\begin{equation}
\begin{aligned}
    w e^\phi \leq w e^\phi |_p 
    &\leq A(u  - \ul u - \inf_M(u - \ul u) + 1)^{\frac{3}{2}} e^{ - A ( u - \ul u) + 1} \\
    &\leq A e^2 e^{- A  \inf_M(u - \ul u) } 
\end{aligned}
\end{equation}
and hence
\begin{equation}
\begin{aligned}
    	w &\leq A e^2 e^{ A ( u - \ul u) - \frac{1}{u - \ul u - \inf_M (u - \ul u) + 1} - A  \inf_M(u - \ul u) } \\
	&\hspace{2em}\leq A e^2 e^{ A ( u - \ul u) - A  \inf_M(u - \ul u) } \leq C e^{A (u - \inf_M u)} .
\end{aligned}
\end{equation}

\end{proof}

\bigskip

\section{Higher order estimates}
\label{ma2-higher}
\setcounter{equation}{0}
\medskip

To finish the proof of $C^\infty$ estimates, it is sufficient to prove the $C^\beta$ estimate on $u_{i\bar j}$ for some $\beta \in (0,1)$. Once we have the H\"older bound for $u_{i\bar j}$, higher order regularity can be archieved by differentiating equation~\eqref{ma2-main-equation} and then applying Schauder estimates.

Following the work of Tosatti and Weinkove \cite{TWv10a}, we apply Trudinger's idea~\cite{Trudinger83} to prove a complex version of the Evans-Krylov theory.
\begin{proposition}
\label{ma2-higher-beta}
Let $u$ be an admissible solution to equation~\eqref{ma2-main-equation}. Then there exist constants $\beta \in (0,1)$ and $C$ depending only on the geometric data such that
\begin{equation}
	\big[u_{i\bar j}\big]_{\beta , M} \leq C .
\end{equation}
\end{proposition}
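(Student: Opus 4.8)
The plan is to adapt the interior complex Evans--Krylov argument of Tosatti--Weinkove \cite{TWv10a}, which in turn follows Trudinger's version \cite{Trudinger83} of the Evans--Krylov estimate, to the operator $F(X_{i\bar j}) = S_\a(X^{-1})$ appearing in the equivalent form \eqref{ma2-equation-equivalent}. Since $M$ is closed, a covering argument reduces everything to a fixed small coordinate ball $B_{2r}(p)$, and by the $C^2$ bound from the previous proposition together with the cone condition we already know that in $B_{2r}(p)$ the Hessian matrix $X = (X_{i\bar j})$ lies in a fixed compact subset $\mathcal{K}$ of the admissible cone $\{ \lambda^*(X) \in \Gamma \}$ on which $F$ is uniformly elliptic and concave (concavity of $S_\a^{1/\a}$, equivalently the strong concavity of $S_\a$ invoked around \eqref{ma2-formula-S-partial}--\eqref{ma2-formula-S-double-derivative}). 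So the task is genuinely a local second-derivative H\"older estimate for a concave uniformly elliptic complex Monge--Amp\`ere type operator.

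First I would differentiate \eqref{ma2-equation-equivalent} once in a real direction $\gamma$, tangent to the ball, to get a linear equation $F^{i\bar j} \partial_\gamma X_{i\bar j} = (\text{lower order})$ for the first derivatives, where $F^{i\bar j} = \partial F / \partial X_{i\bar j}$ is the linearized coefficient matrix; the right-hand side is controlled because $\psi$ is smooth and the torsion/curvature terms of $g$ and $\chi$ are bounded. The crux is then the Trudinger oscillation argument: for the function $w^{\xi\bar\xi} := X(\xi,\bar\xi) = \sum_{i,j} X_{i\bar j}\xi^i\bar\xi^j$ along unit complex directions $\xi$, one uses concavity of $F$ to show that $\sup_{B_s} w^{\xi\bar\xi} + \inf_{B_s}(\text{something involving } w \text{ in other directions})$ is a subsolution-type quantity, then applies the weak Harnack inequality (Krylov--Safonov / ABP for the nondivergence linear operator with bounded measurable coefficients $F^{i\bar j}$) to each of the functions $\sup_{B_r} w^{\xi\bar\xi} - w^{\xi\bar\xi}$, which are nonnegative supersolutions. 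Summing the resulting Harnack estimates over a finite $\e$-net of directions $\xi$ on the unit sphere, and using the uniform ellipticity to bound $\Delta w$ in terms of the $w^{\xi\bar\xi}$, yields an oscillation decay $\operatorname{osc}_{B_{s}} X_{i\bar j} \leq C (s/r)^\beta \operatorname{osc}_{B_r} X_{i\bar j} + C s$, which gives the H\"older seminorm bound by the standard iteration lemma.

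The main obstacle, and the point where the complex Hermitian setting differs from the real or K\"ahler cases, is handling the first-order (torsion) terms: when one differentiates $X_{i\bar j} = \chi_{i\bar j} + \bpartial_j\p_i u$ and commutes derivatives, one does not get a pure second derivative of $u$ but picks up connection and torsion terms as in \eqref{ma2-formula-X-1} and \eqref{ma2-C2-formula-X-index-1}, so the quantities $w^{\xi\bar\xi}$ satisfy the linear equation only up to bounded error terms that must be absorbed into the $Cs$ on the right-hand side and not spoil the supersolution property. This is exactly the technical device Tosatti and Weinkove introduced, and I would follow it: carry the errors as controlled inhomogeneous terms through the ABP/weak Harnack estimates, which is legitimate since they are bounded in $L^\infty$ (indeed $L^{2n}$ suffices) by the established $C^2$ estimate. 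Once the H\"older bound $[u_{i\bar j}]_{\beta,M} \leq C$ is in hand, bootstrapping is routine: the equation becomes uniformly elliptic with $C^\beta$ coefficients, Schauder gives $u \in C^{2,\beta}$, and differentiating repeatedly gives all higher $C^\infty$ bounds, completing Theorem \ref{ma2-theorem-estimate}.
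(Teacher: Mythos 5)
Your proposal follows essentially the same route as the paper: the Tosatti--Weinkove/Trudinger complex Evans--Krylov scheme, using the concavity of $F(A)=-S_\a(A^{-1})$ to get the difference inequality and the subsolution property of the second derivatives $u_{\gamma\bar\gamma}$, a finite family of directions realizing the linearized coefficients, the weak Harnack inequality applied to $\sup u_{\gamma_k\bar\gamma_k}-u_{\gamma_k\bar\gamma_k}$, and the standard oscillation-iteration lemma to conclude $\Phi(R)\le C R^\beta$. The only cosmetic difference is that the paper selects the directions $\gamma_k$ from a decomposition $F^{i\bar j}=\sum_k\tau_k(\gamma_k)^i\overline{(\gamma_k)^j}$ with $0<C_1\le\tau_k\le C_2$ rather than from an $\epsilon$-net, which is the same device in substance.
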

\begin{proof}
Let $U \subset \bfC^n$ be a small open set containing $B_{2R}$, and consider the equivalent equation~\eqref{ma2-equation-equivalent}. Without loss of generality, we assume $R < 1$. Define
\begin{equation}
	F(A) = - S_\a (A^{-1})
\end{equation}
for any positive definite Hermitian matrix $A$.

Let $\gamma \in \bfC^n$ be an arbitrary vector. Differentiating the equation with respect to $\gamma$ and then $\bar\gamma$, we obtain from \eqref{ma2-formula-S-double-derivative},
\begin{equation}
\label{ma2-higher-beta-inequality-1}
 	 \sum_{i,j}  F^{i\bar j} u_{i\bar j\gamma\bar \gamma}  \geq - C^\a_n \bar\partial_\gamma\partial_\gamma (\psi^{-1}) - \sum_{i,j} F^{i\bar j} \chi_{i\bar j\gamma\bar \gamma} .
\end{equation}
Also by the concavity of $F$,  it is easy to see that
\begin{equation}
\label{ma2-higher-beta-inequality-2}
	\sum_{i,j} F^{i\bar j}(y) \left(X_{i\bar j}(y) - X_{i\bar j} (x)\right) \leq - C^\a_n \psi^{-1}(y) + C^\a_n \psi^{-1}(x) \leq C |y - x|,
\end{equation}
for all $x$, $y$ in $U$.

Since we have {\em a priori} estimates for $F^{i\bar j}$, we can find a finite number $N$ of unit vectors $\gamma_1$, $\cdots$, $\gamma_N$ in $\bfC^n$ and real valued functions $\tau_1$, $\cdots$, $\tau_N$ such that
\begin{equation}
\label{ma2-higher-beta-inequality-3}
	0 < C_1 \leq \tau_k \leq C_2 , \quad\forall\, k = 1,\cdots,N, 
\end{equation}
\begin{equation}
	\gamma_1, \cdots, \gamma_N \text{ contains an orthonornal basis of } \bfC^n,
\end{equation}
and
\begin{equation}
	F^{i\bar j} (y) = \sum^N_{k = 1} \tau_k (y) (\gamma_{k})^i \ol{(\gamma_k)^j}
\end{equation}
From \eqref{ma2-higher-beta-inequality-2}, 
\begin{equation}
\label{ma2-higher-beta-inequality-4}
	\sum^N_{k = 1} \tau_k (y) \left(X_{\gamma_k\bar \gamma_k}(y) - X_{\gamma_k\bar \gamma_k} (x)\right) \leq C |y - x|.
\end{equation}
Using \eqref{ma2-higher-beta-inequality-3} and the mean value theorem, inequality~\eqref{ma2-higher-beta-inequality-4} can be rewritten as
\begin{equation}
\label{ma2-higher-beta-inequality-5}
	\sum^N_{k = 1} \tau_k (y) \left(u_{\gamma_k\bar \gamma_k}(y) - u_{\gamma_k\bar \gamma_k} (x)\right) \leq C |y - x|.
\end{equation}

We need the following Harnack inequality \cite{ChenWu98}, \cite{GT}.
\begin{lemma}
\label{ma2-higher-beta-lemma-Harnack}
Let $\big[F^{i\bar j}\big]_{n\times n}$ be uniformly elliptic under the Euclidean metric on $U \subset \bfC^n$. Suppose that $v \geq 0$ satisfies that
\begin{equation}
	\sum_{i,j} F^{i\bar j} \p_i\bar\p_j v \leq f ,
\end{equation}
on $B_{2R} \subset U$. Then there exist uniform constants $p > 0$ and $C > 0$ such that
\begin{equation}
\label{ma2-higher-beta-lemma-Harnack-inequality}
	\left(\frac{1}{R^{2n}}\int_{B_R} v^p\right)^{\frac{1}{p}} \leq C \left(\inf_{B_R} v + R||f||_{L^{2n}(B_{2R})}\right) .
\end{equation}
\end{lemma}

For $s = 1,2$ and $k = 1, \cdots, N$, let
\begin{equation}
	M_{sk} := \sup_{B_{sR}} u_{\gamma_k\bar\gamma_k}, m_{sk} := \inf_{B_{sR}} u_{\gamma_k\bar\gamma_k},
\end{equation}
and
\begin{equation}
	\Phi(sR) := \sum^N_{k = 1} \text{osc}_{B_{sR}} u_{\gamma_k\bar\gamma_k} = \sum^N_{k = 1} (M_{sk} - m_{sk}) .
\end{equation}

Applying Lemma~\ref{ma2-higher-beta-lemma-Harnack} to $M_{2k} -  u_{\gamma_k \bar\gamma_k}$, we obtain from \eqref{ma2-higher-beta-inequality-1}
\begin{equation}
\label{ma2-higher-beta-inequality-6}
\begin{aligned}
	&\hspace{2em}\left(\frac{1}{R^{2n}}\int_{B_R} (M_{2k} - u_{\gamma_k \bar\gamma_k})^p\right)^{\frac{1}{p}} \\
	&\leq C \left(M_{2k} - M_{1k} + R\Big|\Big| C^\a_n \bar\partial_\gamma\partial_\gamma (\psi^{-1}) + \sum_{i,j} F^{i\bar j} \chi_{i\bar j\gamma\bar \gamma}\Big|\Big|_{L^{2n}(B_{2R})}\right) \\
	&\leq C \left(M_{2k} - M_{1k} + R^2 \right) \leq C \left(M_{2k} - M_{1k} + R \right) .
\end{aligned}
\end{equation}
Note that in the last inequality, we use the assumption that $R < 1$. Letting $l$ be an integer such that $1 \leq l \leq N$, we have
\begin{equation}
\label{ma2-higher-beta-inequality-7}
\begin{aligned}
	\left(\frac{1}{R^{2n}}\int_{B_R} \Big(\sum_{k \neq l}(M_{2k} - u_{\gamma_k \bar\gamma_k})\Big)^p\right)^{\frac{1}{p}} &\leq \sum_{k \neq l} \left(\frac{1}{R^{2n}}\int_{B_R} (M_{2k} - u_{\gamma_k \bar\gamma_k})^p\right)^{\frac{1}{p}} \\
	&\leq C \left(\sum_{k \neq l} (M_{2k} - M_{1k}) + R\right) . 
\end{aligned}
\end{equation}
But \eqref{ma2-higher-beta-inequality-5} tells us that
\begin{equation}
\label{ma2-higher-beta-inequality-8}
 	\tau_l (y) \left(u_{\gamma_l\bar \gamma_l}(y) - u_{\gamma_l\bar \gamma_l} (x)\right) \leq C |y - x| - \sum_{k \neq l} \tau_k (y) \left(u_{\gamma_k\bar \gamma_k}(y) - u_{\gamma_k\bar \gamma_k} (x)\right).
\end{equation}
For arbitrary $\epsilon > 0$, we can choose $x \in B_{2R}$ such that $ u_{\gamma_l\bar \gamma_l} (x) \leq m_{2l} + \epsilon $,
and hence
\begin{equation}
\label{ma2-higher-beta-inequality-9}
	u_{\gamma_l\bar \gamma_l}(y) - m_{2l} - \epsilon \leq C \Big(R + \sum_{k \neq l}( M_{2k} - u_{\gamma_k\bar \gamma_k}(y) )\Big) .
\end{equation}
Since $\epsilon$ is arbitraty, it follows that
\begin{equation}
	u_{\gamma_l\bar \gamma_l}(y) - m_{2l} \leq C \Big(R + \sum_{k \neq l}( M_{2k} - u_{\gamma_k\bar \gamma_k}(y) )\Big) .
\end{equation}
Integrating in terms of $y$ over $B_R$ and applying \eqref{ma2-higher-beta-inequality-7},
\begin{equation}
\label{ma2-higher-beta-inequality-10}
\begin{aligned}
	\left(\frac{1}{R^{2n}} \int_{B_R} (u_{\gamma_l\bar \gamma_l} - m_{2l})^p\right)^{\frac{1}{p}} &\leq C \left(\frac{1}{R^{2n}} \int_{B_R} \Big(R + \sum_{k \neq l}( M_{2k} - u_{\gamma_k\bar \gamma_k} )\Big)^p \right)^{\frac{1}{p}} \\
	&\leq C \left(R + \left(\frac{1}{R^{2n}} \int_{B_R}\Big(\sum_{k \neq l}(M_{2k} - u_{\gamma_k \bar\gamma_k})\Big)^p\right)^{\frac{1}{p}}\right) \\
	&\leq C \left(R + \sum_{k \neq l} (M_{2k} - M_{1k}) \right) .
\end{aligned}
\end{equation}
Adding \eqref{ma2-higher-beta-inequality-6} and \eqref{ma2-higher-beta-inequality-10} with $k = l$ in \eqref{ma2-higher-beta-inequality-6}, it follows that
\begin{equation}
\label{ma2-higher-beta-inequality-11}
\begin{aligned}
	M_{2l} - m_{2l} &\leq C \left(R + \sum^N_{k = 1} (M_{2k} - M_{1k}) \right) \\
	& \leq C \left(R + \Phi(2R) - \Phi(R)  \right) ,
\end{aligned}
\end{equation}
and thus for some uniform constant $0 < \delta < 1$,
\begin{equation}
\label{ma2-higher-beta-inequality-12}
	\Phi(R) \leq \delta\Phi(2R) + R .
\end{equation}

We recall a lemma from \cite{ChenWu98} (see also \cite{GT}).
\begin{lemma}
\label{ma2-higher-beta-lemma-iteration}
Let $\Phi(R)$ be a nondecreasing function on $(0 , R_0]$. Suppose that there exist $0 < \theta , \delta <1$, $0 < \kappa \leq 1$, $K \geq 0$ such that
\begin{equation}
	\Phi(\theta R) \leq \delta \Phi(R) + K R^\kappa , \quad \forall 0 < R \leq R_0 .
\end{equation}
Then for some $0 < \beta \leq \kappa$, $C > 0$, we have
\begin{equation}
	\Phi(R) \leq C \left(\frac{R}{R_0}\right)^\beta \left[\Phi(R_0) + K R^\kappa_0\right] ,
\end{equation}
for all $R \leq R_0$.

\end{lemma}

It then follows by applying Lemma~\ref{ma2-higher-beta-lemma-iteration} to \eqref{ma2-higher-beta-inequality-11} that
\begin{equation}
	\Phi(R) \leq C {R}^\beta  .
\end{equation}
This completes the proof of the theorem.

\end{proof}
\begin{remark}
As shown in \cite{GL10}, an alternative way  is to prove a bound on the real Hessian of $u$, and then use the results of Evans and Krylov. Caffarelli, Kohn, Nirenberg, Spruck~\cite{CKNS} and Blocki~\cite{Blocki12}proved similar results in different cases.
\end{remark}

\bigskip

\section{Method of continuity}
\label{ma2-continuity-method}
\setcounter{equation}{0}
\medskip

On closed manifolds, a crucial step is to make method of continuity work, especially the openness part. In this section, we define a new metric and apply the approach in \cite{TWv10a}. For completeness, we include the entire argument here.

Define $\varphi > 0$ by
\begin{equation}
    \chi^n = \varphi \chi^{n - \a} \wedge \omega^\a .
\end{equation}

We use the continuity method and consider the family of equations
\begin{equation}
\label{ma2-continuity-mehtod-main}
    (\chi + \frac{\sqrt{-1}}{2}\p\bpartial u_t)^n = \psi^t \varphi^{1-t} e^{b_t} (\chi + \frac{\sqrt{-1}}{2}\p\bpartial u_t)^{n - \a} \wedge \omega^\a , \qquad\mbox{ for } t\in [0,1],
\end{equation}
with
\begin{equation}
    \chi + \frac{\sqrt{-1}}{2}\p\bpartial u_t >0 ,
\end{equation}
where $b_t$ is a constant for each $t$. We consider the set
\begin{equation}
\label{ma2-continuity-method-definition-T}
    \mathcal{T} := \{t'\in[0,1]\;|\; \exists \; u_t \in C^{2,\a}(M) \text{ and } b_t \text{ solving } \eqref{ma2-continuity-mehtod-main} \text{ for } t\in[0,t']\}.
\end{equation}

In this section, we assume: 
(1) $0 \in \mathcal{T}$, i.e., $b_0$ is known;
(2) we have uniform $C^\infty$ estimates for all $u_t$. 

Assumption (1) tells us that $0 \in \mathcal{T}$ and hence $\mathcal{T}$ is not empty. It suffices to show that $\mathcal{T}$ is both open and closed in $[0,1]$.

First, we prove that $\mathcal{T}$ is closed. From equation \eqref{ma2-continuity-mehtod-main}, we have 
\(
 \psi^t \varphi^{1 - t} e^{b_t} \geq \varphi
\)
when $u_t$ achieves its minimum; and
\(
 \psi^t \varphi^{1 - t} e^{b_t} \leq \varphi
\)
when $u_t$ achieves its maximum. So
\begin{equation}
	|b_t| \leq \sup_M |\ln \varphi - \ln \psi| .
\end{equation}
The closedness of $\mathcal{T}$ follows from the uniform bound for $b_t$ and uniform $C^\infty$ estimates of $u_t$.

Now we show that $\mathcal{T}$ is open. Assuming that $\hat t \in \mathcal{T}$, we need to show that there exists small $\epsilon > 0$ such that $t\in\mathcal{T}$ for any $t\in[\hat t , \hat t + \epsilon)$.

Set $F(u) := \frac{S_n (\chi_u)}{S_{n - \a} (\chi_u)}$.  We have
\begin{equation}
\label{ma2-continuity-method-time-local-equation}
    \frac{F(u_t)}{F(u_{\hat t})} = \psi^{t - \hat t}\varphi^{\hat t - t} e^{b_t - b_{\hat t}} .
\end{equation}
Note that $F$ here is different than that in the previous section.

We define a new Hermitian metric corresponding to $u$,
\begin{equation}
\label{ma2-defintion-new-metric}
    \Omega := F(u_t)\sum_{i,j} F_{i\bar j}(u_{ t}) dz^i\wedge d\bar z^j
\end{equation}
and hence
\begin{equation}
    \hat\Omega = F(u_{\hat t})\sum_{i,j} F_{i\bar j}(u_{\hat t}) dz^i\wedge d\bar z^j ,
\end{equation}
where $F^{i\bar j} = \frac{\p F}{\p u_{i\bar j}}$ and $[F_{i\bar j}]_{n\times n}$ is the inverse of $[F^{i\bar j}]_{n\times n}$.

Applying Gauduchon's theorem \cite{Ga77} to $\hat\Omega$, there exists a function $\hat f$ such that $\hat\Omega_G = e^{\hat f} \hat\Omega$ is Gauduchon, i.e., 
\(
	\p\bpartial (\hat\Omega_G^{n-1}) = 0. 
\)
By adding a constant to $\hat f$, we may assume
\begin{equation}
    \int_M e^{(n-1)\hat f} \hat\Omega^n = 1.
\end{equation}

We try to solve the equation
\begin{equation}
\label{ma2-continuity-method-time-local-equation-modified}
    \frac{F(u_t)}{F(u_{\hat t})} = \Big(\int_M \frac{F(u_t)}{F(u_{\hat t})} e^{(n -1)\hat f} \hat\Omega^n \Big)\psi^{t - \hat t}\varphi^{\hat t - t} e^{c_t} ,
\end{equation}
where $c_t$ is chosen so that
\begin{equation}
    \int_M \psi^{t - \hat t} \varphi^{\hat t - t} e^{c_t} e^{(n-1)\hat f} \hat\Omega^n = 1.
\end{equation}
Obviously, $c_{\hat t} = 0$.

Define two Banach manifolds $B_1$ and $B_2$ by
\begin{equation*}
\begin{aligned}
    	B_1 &:= \left\{\eta \in C^{2,\a}(M)\;\Big|\;\int_M \eta e^{(n - 1)\hat f}\hat\Omega^n = 0\right\} , \\
	B_2 &:= \left\{h\in C^{\a}(M)\;\Big|\; \int_M e^h e^{(n - 1)\hat f} \hat\Omega^n = 1\right\}.
\end{aligned}
\end{equation*}
It is easy to see that
\begin{equation*}
    T_0 B_1 = B_1  \text{ and } T_0 B_2 = \left\{\rho\in C^\a(M)\;\Big|\;\int_M\rho e^{(n - 1)\hat f}\hat\Omega^n = 0\right\} .
\end{equation*}
Also define a linear operator $\Psi: B_1 \rightarrow B_2$ by
\begin{equation*}
    	\Psi(\eta) := \log F(\eta + u_{\hat t}) - \log F(u_{\hat t}) - \log \Big( \int_M \frac{F(\eta + u_{\hat t})}{F(u_{\hat t})} e^{(n - 1)\hat f} \hat \Omega^n \Big) .
\end{equation*}
Note that $\Psi(0) = 0$. By the inverse function theorem, we only need to show that
\begin{equation*}
    	(D\Psi)_0 : T_0 B_1 \rightarrow T_0 B_2
\end{equation*}
is invertible. Direct calculation shows that
\begin{equation}
\begin{aligned}
    	(D\Psi)_0 (\eta) &= \frac{1}{F(u_{\hat t})} \sum_{i,j} F^{i\bar j}(u_{\hat t})\eta_{i\bar j} - \int_M \frac{1}{F(u_{\hat t})} \sum_{i,j} F^{i\bar j}(u_{\hat t}) \eta_{i\bar j} e^{(n - 1)\hat f}\hat\Omega^n \\
    	&= \Delta_{\hat\Omega} \eta - n \int_M  e^{(n - 1)\hat f}\hat\Omega^{n - 1} \wedge \frac{\sqrt{-1}}{2} \p\bpartial \eta = \Delta_{\hat\Omega} \eta .
\end{aligned}
\end{equation}
It is a result in \cite{Buchdahl99} that the equation  $\Delta_{\hat\Omega_G} \eta = \rho$ is solvable if $\int_M \rho \hat\Omega_G^n = 0$. Given $\rho \in T_0 B_2$, we have
\begin{equation}
    \int_M \rho e^{(n - 1)\hat f} \hat\Omega^n = \int_M \rho e^{-\hat f} \hat\Omega^n_G = 0.
\end{equation}
Therefore we can solve equation \eqref{ma2-continuity-method-time-local-equation-modified} for $t\in[\hat t , \hat t + \epsilon)$, and hence $\mathcal{T}$ is open.

\bigskip

\section{Solving the complex Monge-Amp\`ere type equations}
\label{ma2-solutions}
\setcounter{equation}{0}
\medskip

In this section, we give proofs of the existence results stated in section~\ref{ma2-int}. Observe that in section \ref{ma2-continuity-method}, we make two assumptions to carry out the method of continuity. It suffices to show that the two assumptions are fulfilled. The obstacle for the uniform estimates of $u_t$ is that the cone condition generally does not work for all $\psi^t \varphi^{1-t} e^{b_t}$.
\begin{proof}[Proof of Theorem \ref{ma2-theorem-hermitian}]
Under the given conditions, we begin the method of continuity from $\chi$. It is easy to see that $b_0 = 0$.

Since
\begin{equation}
    \frac{\chi^n}{\chi^{n - \a} \wedge \omega^\a}\leq \psi ,
\end{equation}
we have $\varphi \leq \psi$. At the maximum point of $u_t$,
\begin{equation}
	\psi^t \varphi^{1-t} e^{b_t} \leq \varphi,
\end{equation}
so
\begin{equation}
	b_t \leq 0.
\end{equation}
This means on $M$
\begin{equation}
	\psi^t \varphi^{1-t} e^{b_t} \leq \psi.
\end{equation}
Then the cone condition $\mathscr{C}_\a (\psi)$ is uniform for all $u_t$. As a result, we have uniform $C^\infty$ estimates of $u_t$.

\end{proof}

When $\chi$ and $\omega$ are both K\"ahler, we have more information about the equations . This helps us to obtain deeper results.

\begin{proof}[Proof of Theorem \ref{ma2-theorem-kahler}]

In order to prove Theorem \ref{ma2-theorem-kahler}, we need to apply method of continuity twice.

First, since $\chi \in \mathscr{C}_\a (\psi)$, there must be a function $\ul u$ satisfying 
\begin{equation}
	\chi_{\ul u} = \chi + \frac{\sqrt{-1}}{2} \p\bpartial\ul u > 0
\end{equation}
and 
\begin{equation}
	n \chi^{n - 1}_{\ul u} > (n - \a) \psi \chi^{n - \a - 1}_{\ul u} \wedge \omega^\a .
\end{equation}

Define $\ul \varphi$ by
\begin{equation}
	\chi^n_{\ul u} = \ul\varphi \chi^{n - \a}_{\ul u} \wedge \omega^\a .
\end{equation}
It is easy to see that
\begin{equation}
	n \chi^{n - 1}_{\ul u} > (n - \a) \ul\varphi \chi^{n - \a - 1}_{\ul u} \wedge \omega^\a .
\end{equation}
And hence
\begin{equation}
	n \chi^{n - 1}_{\ul u} > (n - \a) (\max\{\psi, \ul \varphi\} + \delta)\chi^{n - \a - 1}_{\ul u} \wedge \omega^\a 
\end{equation}
for sufficiently small $\delta > 0$. By approximation, we can find a smooth function $v$ such that
\begin{equation}
	\chi_{v} = \chi + \frac{\sqrt{-1}}{2} \p\bpartial v > 0,
\end{equation}
\begin{equation}
	\frac{\chi^n_{v}}{\chi^{n - \a}_{v} \wedge \omega^\a} \leq \frac{\chi^n_{\ul u}}{\chi^{n - \a}_{\ul u} \wedge \omega^\a} + \frac{\delta}{2} = \ul \varphi + \frac{\delta}{2}
\end{equation}
and 
\begin{equation}
	n \chi^{n - 1}_{v} > (n - \a) \psi_0 \chi^{n - \a - 1}_{v} \wedge \omega^\a ,
\end{equation}
where $\psi_0$ is a smooth function satisfying $\psi_0 \geq \max\{\psi , \ul\varphi\} + \frac{\delta}{2}$. So we have $\chi_v \in \mathscr{C}_\a (\psi_0)$ and
\begin{equation}
\frac{\chi^n_v}{\chi^{n - \a}_v \wedge \omega^\a} \leq \psi_0.
\end{equation}
By Theorem \ref{ma2-theorem-hermitian}, there exists an admissible solution $u_0$ of
\begin{equation}
    \left(\chi + \frac{\sqrt{-1}}{2}\p\bpartial u \right)^n = \psi_0 e^{b_0}\left(\chi + \frac{\sqrt{-1}}{2}\p\bpartial u\right)^{n - \a} \wedge \omega^\a , 
\end{equation}
for some $b_0 \leq 0$.

Second, we start the method of continuity from $\chi_{u_0}$ and consider the family of equations
\begin{equation}
\label{ma2-equation-continuity-method-1}
    (\chi + \frac{\sqrt{-1}}{2}\p\bpartial u_t)^n = \psi^t \psi_0^{1-t} e^{b_t} (\chi + \frac{\sqrt{-1}}{2}\p\bpartial u_t)^{n - \a} \wedge \omega^\a , \quad\text{ for } t\in [0,1].
\end{equation}
Note that $b_0$ has been found out in the first stage.

Integrating equation \eqref{ma2-equation-continuity-method-1},
\begin{equation}
	\int_M \chi^{n} = \int_M \psi^t \psi_0^{1-t} e^{b_t} (\chi + \frac{\sqrt{-1}}{2}\p\bpartial u_t)^{n - \a} \wedge \omega^\a \geq c e^{b_t} \int_M \chi^{n - \a} \wedge \omega^\a ,
\end{equation}
which implies
\begin{equation}
	b_t \leq 0.
\end{equation}
As a consequence,
\begin{equation}
	 \psi^t \psi_0^{1-t} e^{b_t} \leq  \psi^t \psi_0^{1-t} \leq \psi_0.
\end{equation}
Therefore, we have uniform $C^\infty $ estimates of $u_t$.
\end{proof}

\bigskip
\noindent
{\bf Acknowledgements}\quad   
The author would like to thank Bo Guan for constant support and countless advice. The author also wishes to thank Ben Weinkove for his helpful suggestions and comments.


\begin{thebibliography}{99}

\bibitem{Aubin78}
T. Aubin,
{\em \'Equations du type Monge-Amp\`ere sur les vari\'et\'es k\"ahl\'eriennes
compactes}, (French)  Bull. Sci. Math. (2) {\bf 102} (1978), 63--95.

\bibitem{Blocki12}
Z. Blocki,
{\em On geodesics in the space of Kähler metrics}, 
Adv. Lect. Math. {\bf 21}, Int. Press (2012), 3--20. 

\bibitem{Buchdahl99}
N. Buchdahl,
{\em On compact K\"ahler surfaces},
Ann. Inst. Fourier (Grenoble) {\bf 49} (1999), no. 1, 287--302.

\bibitem{CKNS}
L. A. Caffarelli, J. J. Kohn, L. Nirenberg and J. Spruck, {\em The
Dirichlet problem for nonlinear second-order elliptic equations II.
Complex Monge-Amp\`{e}re and uniformly elliptic equations},
{ Comm. Pure Applied Math.} {\bf 38} (1985), 209--252.

\bibitem{Calabi56}
E. Calabi,
{\em The space of K\"ahler metrics},
Proc. ICM, Amsterdam 1954, Vol. {\bf 2}, 206--207, North-Holland, Amsterdam, 1956.

\bibitem{Calabi57}
E. Calabi,
{\em On K\"ahler manifolds with vanishing canonical class},
in {\em Algebraic geometry and topology: A symposium in honor of S. Lefschetz}, 78--89. Princeton University Press, 1957.

\bibitem{ChenWu98}
Y.-Z. Chen and Wu L.-C., 
{\em Second order elliptic equations and elliptic systems}, 
Vol. {\bf 174}, Amer. Math. Soc., 1998.

\bibitem
{Chen00b}
X.-X. Chen,
{\em On the lower bound of the Mabuchi energy and its application},
Int. Math. Res. Notices \textbf{12}  (2000),  607--623

\bibitem{Chen04}
X.-X. Chen,
{\em A new parabolic flow in K\"ahler manifolds},
Comm. Anal. Geom. {\bf 12} (2004), 837--852.

\bibitem{Cherrier87}
P. Cherrier,
{\em Equations de Monge-Amp\`ere sur les vari\'et\'es hermitiennes compactes},
Bull. Sci. Math. {\bf 111} (1987), 343--385.


\bibitem{Donaldson99a}
S. K. Donaldson,
{\em Moment maps and diffeomorphisms},
Asian J. Math. {\bf 3} (1999), 1--16.


\bibitem{Evans82}
L. C. Evans,
{\em Classical solutions of fully nonlinear, convex, second‐order elliptic equations},
Comm. Pure Appl. Math. {\bf 35} (1982), 333--363.

\bibitem{FLM11}
H. Fang, M.-J. Lai and X.-N. Ma,
{\em On a class of fully nonlinear flows in K\"ahler geometry}
J. Reine Angew. Math. {\bf 653} (2011), 189--220.

\bibitem{Ga77}
P. Gauduchon,
{\em Le th\'eor\`eme de l'excentricit\'e nulle},
C. R. Acad. Sci. Paris {\bf 285} (1977), 387--390.

\bibitem{GT}
D. Gilbarg and N. S. Trudinger,
{\em Elliptic partial differential equations of second order}, Vol. {\bf 224}, Springer Verlag, 2001.


\bibitem{GL10}
B. Guan and Q. Li,
{\em Complex Monge-Amp\`ere equations and totally real submanifolds},
Adv. Math. {\bf 225} (2010), 1185-1223.

\bibitem{GL12}
B. Guan and Q. Li,
{\em A Monge-Amp\`ere type fully nonlinear equation on Hermitian manifolds},
Disc. Cont. Dynam. Syst. B {\bf 17} (2012), 1991--1999.



\bibitem{GSun12}
B.~ Guan and W.~Sun,
{\em On a class of fully nonlinear elliptic equations on Hermitian manifolds},
to apprear in Calculus of Variations and PDE.

\bibitem{GLZ}
P.-F. Guan, Q. Li and X. Zhang, A uniqueness theorem in K\"{a}hler
geometry, Math.Ann. {\bf 345} (2009), 377--393.


\bibitem{Krylov82}
N. V. Krylov,\,
{\em Boundedly nonhomogeneous elliptic and parabolic equations},
Izvestiya Ross. Akad. Nauk. SSSR {\bf 46} (1982), 487--523.


\bibitem{PhongSturm10}
D. H. Phong and J. Sturm,
{\em The Dirichlet problem for degenerate complex Monge-Amp\`ere equations},
Comm. Anal. Geom. {\bf 18} (2010), no. 1, 145--170.

\bibitem{SW08}
J. Song and B. Weinkove,
{\em On the convergence and singularities of the J-flow with applications
to the Mabuchi energy},
Comm. Pure Appl. Math. {\bf 61} (2008), 210--229.

\bibitem{TWv10a}
V. Tosatti and B. Weinkove,
{\em Estimates for the complex Monge-Amp\`ere equation on Hermitian and
balanced manifolds},
Asian J. Math. {\bf 14} (2010), 19--40.

\bibitem{TWv10b}
V. Tosatti and B. Weinkove,
{\em The complex Monge-Amp\`ere equation on compact Hermitian manifolds},
 J. Amer. Math. Soc. {\bf 23} (2010), 1187--1195.

\bibitem{TWv11}
V. Tosatti and B. Weinkove,
{\em On the evolution of a Hermitian metric by its Chern-Ricci form},
preprint (2011).

\bibitem{Trudinger83}
N. S. Trudinger, 
{\em Fully nonlinear, uniformly elliptic equations under natural structure conditions}, 
Trans. Amer. Math. Soc. {\bf 278}, no. 2 (1983), 751--769.

\bibitem{Weinkove04}
B. Weinkove,
{\em Convergence of the J-flow on K\"ahler surfaces},
Comm. Anal. Geom. {\bf 12} (2004), 949--965.

\bibitem{Weinkove06}
B. Weinkove,
{\em On the J-flow in higher dimensions and the lower boundedness
of the Mabuchi energy},
J. Differential Geom. {\bf 73} (2006), 351--358.

\bibitem{Yau78}
S.-T. Yau,
{\em On the Ricci curvature of a compact K\"ahler manifold and the complex
Monge-Amp\`ere equation. I.}
Comm. Pure Appl. Math. {\bf 31} (1978), 339--411.


\end{thebibliography}
\end{document}